\def\Pprime{\ocoker'}
\def\naive{{\text{naive}}}
\def\exact{{\text{exact}}}
\def\cube#1#2#3#4#5#6#7#8{
& #5 \ar[rr] \ar[dl] \ar@{-}[d] && #6 \ar[dd] \ar[dl] \\
#1 \ar[rr] \ar[dd]  & \ar[d] & #2 \ar[dd] \\
& #7 \ar@{-}[r] \ar[dl] & \ar[r] & #8 \ar[dl] \\
#3 \ar[rr] && #4 \\
}
\def\HomMF{\operatorname{\uHom_{MF}}}
\def\rDsg{\mathsf{D}^{\mathsf{rel}}_{\mathsf{sg}}}
\def\rpDsg{{}^{\mathsf{p}}\mathsf{D}^{\mathsf{rel}}_{\mathsf{sg}}}
\def\Dsg{\mathsf{D}_{\mathsf{sg}}}
\def\Dsing{\Dsg}
\def\cU{\mathcal U}
\def\Tot{\operatorname{Tot}}
\def\H{{\mathbb H}}
\def\ker{\operatorname{ker}}
\def\cone{\operatorname{cone}}
\def\cP{\mathcal P}
\def\cE{\mathcal E}
\def\cL{\mathcal L}
\def\cB{\mathcal B}
\def\cC{\mathcal C}
\def\cZ{\mathcal Z}
\def\cF{\mathcal F}
\def\cG{\mathcal G}
\def\cO{\mathcal O}
\def\cT{\mathcal T}
\def\cM{\mathcal M}
\def\cN{\mathcal N}
\def\coker{\operatorname{coker}}
\def\ocoker{\operatorname{\overline{coker}}}
\def\dm{\operatorname{dim}}
\def\Proj{\operatorname{Proj}}
\def\uHom{\operatorname{\underline{Hom}}}
\def\Hom{\operatorname{Hom}}
\def\Ext{\operatorname{Ext}}
\def\cExt{\operatorname{\underline{Ext}}}
\def\Spec{\operatorname{Spec}}
\def\bu{\bullet}
\def\map#1{{\buildrel #1 \over \lra}} 
\def\lmap#1{{\buildrel #1 \over \longleftarrow}} 
\def\lra{\longrightarrow}
\def\into{\hookrightarrow}
\def\onto{\twoheadrightarrow}
\def\cH{\mathcal{H}}
\newcommand{\bP}{\mathbb{P}}
\newcommand{\bH}{{\mathbb{H}}}
\newcommand{\G}{\mathbb{G}}
\newcommand{\bL}{{\mathbb{L}}}
\newcommand{\F}{\mathbb{F}}
\newcommand{\E}{\mathbb{E}}
\newcommand{\bF}{\mathbb{F}}
\newcommand{\bE}{\mathbb{E}}
\newcommand{\bG}{\mathbb{G}}
\newcommand{\C}{\mathbb{C}}
\newcommand{\Z}{\mathbb{Z}}
\numberwithin{equation}{section}
\theoremstyle{plain} 
\newtheorem{thm}[equation]{Theorem}
\newtheorem{lem}[equation]{Lemma}
\newtheorem{prop}[equation]{Proposition}
\newtheorem{introthm}{Theorem}
\theoremstyle{definition}
\newtheorem{defn}[equation]{Definition}
\newtheorem{ex}[equation]{Example}
\theoremstyle{definition}
\theoremstyle{remark}
\newtheorem{rem}[equation]{Remark}
\newtheorem*{ack}{Acknowledgements}
\newcommand{\xra}[1]{\xrightarrow{#1}}
\newcommand{\xla}[1]{\xleftarrow{#1}}
\newcommand{\Perf}{\operatorname{Perf}}
\newcommand{\TP}[2]{\operatorname{TPC}(#1, #2)}
\begin{document}
\title{Matrix factorizations over projective schemes}
\author{Jesse Burke}
\address{Department of Mathematics\\ 
Universit\"at Bielefeld\\ 
33501 Bielefeld\\ 
Germany.}
\email{jburke@math.uni-bielefeld.de}
\author{Mark E. Walker}
\address{Department of Mathematics \\
University of Nebraska\\
Lincoln, NE 68588
}
\email{mwalker5@math.unl.edu}

\subjclass[2000]{14F05, 13D09, 13D02}

\begin{abstract}
We study matrix factorizations of regular global sections of line
bundles on
schemes. If the line bundle is very ample relative to a Noetherian
affine scheme we
show that homomorphisms in the homotopy category of matrix
factorizations may be computed as the hypercohomology of a certain
mapping complex. Using this explicit
description, we prove an analogue of Orlov's theorem that there is a
fully faithful embedding of the homotopy category of matrix
factorizations into the singularity category of the corresponding zero
subscheme. Moreover, we give a complete description of the image of this functor.
\end{abstract}
\maketitle

\section{Introduction}
Given an element $f$ in a commutative ring $Q$, a {\em matrix
  factorization} of $f$ is a pair of $n \times n$ matrices $(A, B)$ such
that $AB = f\cdot I_n= BA$. This construction was introduced by
Eisenbud in \cite{Ei80} to study modules over the factor ring $R =
Q/(f)$. He showed that if
$Q$ is a regular local ring and $f$ is nonzero, the
minimal free resolution of every finitely generated $R$-module is
eventually determined by a matrix factorization \cite[Theorem
6.1]{Ei80}. Buchweitz observed in \cite{Bu87} (see also
\cite[3.9]{MR2101296}) that Eisenbud's Theorem implies 
that there is an equivalence
\begin{equation}\label{affine_equiv}\xymatrix@C=4em{  [MF(Q,f)] \ar[r]^(.37){\cong}_(.37){\coker} & D^b(R)/\Perf(R) =:
  \Dsing(R)} \end{equation}
between the \emph{homotopy category} of matrix
factorizations, which is defined analogously to the homotopy category of
complexes of modules, and the quotient of the bounded derived category of finitely
generated $R$-modules by perfect complexes. Recall that a complex is \emph{perfect} if it is
isomorphic in $D^b(R)$ to a bounded complex of finitely generated
projective $R$-modules.  We call $\Dsing(R)$ the
\emph{singularity category} of $R$, following
\cite{MR2101296}. The equivalence \eqref{affine_equiv} is induced by sending a matrix
factorization $(A,B)$ to the image of the $R$-module $\coker A$ in $\Dsing(R)$.

In this paper we study a scheme theoretic generalization
of matrix factorizations
by replacing $Q$ with a Noetherian separated scheme $X$,
$f$ by a global section $W$ of a line bundle $\cL$ on $X$, and
$R$ by the zero subscheme $i: Y \into X$ of
$W$. A \emph{matrix factorization} of
the  triple
  $(X, \cL, W)$ is a pair of locally free sheaves $\cE_1,
\cE_0$ on $X$ and maps
\[ \cE_1 \xra{e_1} \cE_0 \xra{e_0} \cE_1 \otimes \cL\]
such that $e_0 \circ e_1$ and $(e_1 \otimes 1_\cL) \circ e_0$ are both
multiplication by $W$.  The goal of this paper is to explore a
generalization
of the equivalence \eqref{affine_equiv} to this scheme-theoretic
setting.

The definition of matrix factorizations for schemes given here was introduced in \cite{Polishchuk:2010ys}; similar constructions have been studied in
\cite{1101.5847,1101.4051, 1102.0261}. All of these papers have
in some way dealt with generalizing \eqref{affine_equiv}. Therefore, before we
describe our contributions to this question, let us describe what is
known. The right hand side of \eqref{affine_equiv} makes sense for any
scheme, in particular the zero subscheme $Y \into X$ of $W$. For the
left hand side, one may mimic the affine case and define morphisms analogously to the
homotopy category of complexes of sheaves. We write this category by
$[MF(X, \cL, W)]_\naive$, for reasons that will be clear soon. For a
matrix factorization $(\cE_1 \xra{e_1} \cE_0 \xra{e_0}
\cE_1 \otimes \cL)$, multiplication by $W$
on $\coker e_1$ is zero, and thus we may view $\coker e_1$
as an object of $\Dsing(Y)$. There
is a functor
\begin{equation}\label{intro_funct2} [MF(X, \cL, W)]_\naive \to \Dsing(Y) \end{equation}
that sends a matrix factorization $(\cE_1 \xra{e_1} \cE_0 \xra{e_0}
\cE_1 \otimes \cL)$ to $\coker e_1$.

When $X$ is a regular scheme for which every coherent sheaf is the
quotient of a locally free sheaf, and $W$ is a regular global section of $\cL$
(i.e., $W: \cO_X \to \cL$ is injective), it is straightforward
to see that
the functor \eqref{intro_funct2} is essentially surjective. Indeed, as in the affine
case, every object of $\Dsing(Y)$ is isomorphic to a coherent
sheaf $\cM$ that is maximal Cohen-Macaulay (i.e., for each $y \in Y$, $\cM_y$
is a maximal Cohen-Macaulay (MCM) module over the ring $\cO_{Y,y}$), and one may mimic the
standard argument in the affine case that associates a matrix
factorization to a MCM module: One first takes a surjection $\cE_0
\onto i_*\cM$ with $\cE_0$ locally free on $X$. The
hypotheses ensure that  the kernel
$\cE_1$ will also be locally free. Multiplication by $W$ determines
the vertical maps in the diagram
$$
\xymatrix{
\cE_1 \ar[r]^{\alpha} \ar[d]^W & \cE_0 \ar@{-->}[ld]_\beta \ar[r]
\ar[d]^W &  i_* \cG \ar[d]^W \\
\cE_1 \otimes \cL \ar[r]_{\alpha \otimes id_{\cL}} & \cE_0 \otimes \cL \ar[r] & i_*
\cG \otimes \cL. \\
}
$$
Since the right-most such map is the zero map, there exists a
diagonal arrow $\beta$ causing both triangles to commute. We thus obtain
a pair of locally free coherent sheaves $\cE_0, \cE_1$ on $X$ and morphisms 
$$
\cE_1
\map{\alpha} \cE_0 \map{\beta} \cE_1 \otimes \cL
$$
such that both compositions $\beta \circ
\alpha$ and $(\alpha \otimes id_{\cL}) \circ \beta$ are 
multiplication by $W$. This is a matrix factorization of the data
$(X, \cL, W)$.

In general \eqref{intro_funct2} will not be an equivalence. For
observe that if the cokernel of a matrix factorization is locally
free, then it is trivial in the singularity category. 
When $X$ is affine, locally free sheaves are
projective and the lifting property of such sheaves allows one to construct a
null-homotopy.
But in the non-affine case there is no reason such a null-homotopy
should exist in general, and indeed 
there are
many examples of matrix
factorizations that are non-zero in the naive homotopy category but
that have a locally free cokernel; see  Example
\ref{non-affine-ex}. 

When $X$ is not affine one may, as is done in \cite{1101.4051, Polishchuk:2010ys},
take the Verdier quotient of the naive homotopy category by objects with a
locally free cokernel. Let us write this category as $[MF(X, \cL,
W)]$.  Then \cite[Theorem
3.14]{Polishchuk:2010ys}, see also \cite[Theorem 3.5]{1101.4051},
shows that \eqref{intro_funct2} induces an equivalence
\[[MF(X, \cL, W)] \xra{\cong} \Dsing(Y).\] In \cite{1101.4051} the
line bundle $\cL$ is assumed to be $\cO_X$ but the scheme $X$ is not
assumed to be regular. In that case it is shown that the induced
functor $[MF(X, \cL, W)] \to \Dsing(Y)$ is fully faithful. See also
\cite{1102.0261} for a proof of a similar result using exotic derived categories.

A drawback of Verdier quotients is that morphism sets in quotient
categories can be difficult to compute. In this paper
we offer a different approach to describing the category
$[MF(X, \cL, W)]$. For every
pair of matrix factorizations $\bE, \bF$ there is mapping complex,
denoted by $\HomMF( \bE, \bF)$, which is a twisted two-periodic
complex of locally free sheaves on $X$; see Definition \ref{def_hom_complex} for the precise
description. 
We define the category $[MF(X,
\cL, W)]_{\H}$ to have objects all matrix factorizations and for two
such objects $\bE, \bF$, morphisms between them are given by
\[ \Hom_{[MF]_{\H}}( \bE, \bF) = \bH^0 \HomMF( \bE, \bF),\] 
where $\bH^0$ denotes hypercohomology in degree 0. There is a
composition which is associative and unital.

Recall that a scheme $X$ is projective over a ring $Q$
if there is a closed embedding $j: X \into \bP^m_Q$ for some $m \geq
0$. In this case, we say that $\cO_X(1) := j^* \cO_{\bP^m_Q}(1)$ is the
corresponding very ample line bundle. Our first main result shows that
in this case the two homotopy categories coincide.
\begin{introthm}
\label{introthm1}
Let $X$ be a scheme that is projective over a Noetherian ring and
$\cL = \cO_X(1)$ the corresponding very ample line bundle.
For a global section $W$ of $\cL$ there is an equivalence of categories
$$
[MF(X, \cL,W)] \xra{\cong}
[MF(X, \cL,W)]_{\bH}.
$$
\end{introthm}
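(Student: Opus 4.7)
The plan is to construct a functor $\Phi_\naive:[MF(X,\cL,W)]_\naive\to[MF(X,\cL,W)]_\H$, show it annihilates matrix factorizations with locally free cokernel so as to descend to a functor $\bar\Phi:[MF]\to[MF]_\H$, and then verify that $\bar\Phi$ is fully faithful. Essential surjectivity is automatic since both categories have the same objects, so the content lies in these three steps, with full faithfulness carrying the main weight.

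To build $\Phi_\naive$, observe that a chain map of matrix factorizations is by definition a $0$-cocycle in the complex of global sections $\Gamma(X,\HomMF(\bE,\bF))$, and a chain homotopy is precisely a $(-1)$-cochain whose coboundary equals the difference of two chain maps. Post-composing with the tautological edge map $H^0\Gamma(X,\HomMF(\bE,\bF))\to\bH^0\HomMF(\bE,\bF)$ therefore descends to a well-defined functor on the naive homotopy category. To factor $\Phi_\naive$ through the Verdier quotient $[MF]$, it suffices to show that $\bH^*\HomMF(\bE,\bF)=0$ whenever one of $\bE,\bF$ has locally free cokernel. The local-to-global spectral sequence reduces this to vanishing of the cohomology sheaves $\cH^q\HomMF(\bE,\bF)$, which is a local statement; it follows from the classical affine fact underlying \eqref{affine_equiv} that a matrix factorization with free cokernel is contractible, so the mapping complex is acyclic Zariski-locally.

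The principal difficulty is full faithfulness of $\bar\Phi$. The approach uses projectivity of $X$ twice: first to supply a finite affine open cover $\cU=\{U_i\}$ (which we may further refine so that $\cL$ trivializes on each $U_i$) so that the total complex of the associated Čech bicomplex computes $\bH^0\HomMF(\bE,\bF)$; and second, on each affine intersection $U_\sigma$, to appeal to the affine equivalence $[MF(U_\sigma,\cL|_{U_\sigma},W|_{U_\sigma})]\cong\Dsg(Y\cap U_\sigma)$, under which a Čech $0$-cocycle becomes a compatible family of local morphisms in the singularity categories of the affine pieces of $Y$. Fullness then amounts to assembling such a cocycle into a roof $\bE\xleftarrow{s}\bE'\xra{\phi}\bF$ in $[MF]$ with $\cone(s)$ having locally free cokernel, and faithfulness amounts to the parallel statement that a cocycle which is a Čech coboundary produces a roof equivalent to zero in the Verdier quotient.

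The main obstacle is precisely this gluing step: translating explicitly between Čech hypercohomology classes and Verdier roofs of matrix factorizations. I expect the intermediate factorization $\bE'$ to be constructed as a mapping cone involving twists $\bE\otimes\cO_X(-n)$ for $n\gg0$, with very ampleness of $\cL$ supplying the Serre vanishing and the supply of global morphisms from $\cO_X(-n)^{\oplus r}$ needed to realize the prescribed local maps on $X$. Once the cocycle-to-roof dictionary is set up in a way compatible with both the Čech and Verdier coboundary relations, fullness and faithfulness should follow together in a single bookkeeping argument.
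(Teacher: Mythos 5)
Your high-level architecture matches the paper's: build the canonical functor $[MF]_\naive \to [MF]_\bH$, check it kills locally contractible objects so it descends to $[MF]$, and then establish full faithfulness by replacing $\bE$ with a weakly equivalent $\bE'$ built from negative twists of $\bE$ so that Serre vanishing forces the Čech hypercohomology to collapse to ordinary $H^0\Gamma$. Your final guess --- that $\bE'$ should be a total object involving $\bE\otimes\cO_X(-n)$ for $n\gg0$ --- is precisely the content of the paper's Lemma~\ref{Lem2}, which takes $\bE'=\Tot(\cP(j)^\bullet\otimes\bE)$ where $\cP(j)^\bullet$ is a truncated Koszul complex on a surjection $\cO_X(-j)^k\onto\cO_X$. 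The components of $\bE'$ are then sums of $\cE_i(-a)$ with $a$ arbitrarily large, so the mapping complex $\HomMF(\bE',\bF)$ has all terms in cohomological degrees $\geq M$ acyclic by Serre vanishing, and bounded cohomological dimension of $X$ makes the hypercohomology spectral sequence degenerate at $E_2^{0,0}$.

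However, as written your proposal has a genuine gap: the step you identify as the main obstacle, the ``cocycle-to-roof dictionary,'' is exactly where all the technical content lives, and you do not carry it out --- you only assert that ``once the dictionary is set up, fullness and faithfulness should follow.'' That dictionary, made precise, \emph{is} Lemma~\ref{Lem2} plus the formal Verdier-quotient bookkeeping; there is no shortcut that avoids an explicit construction like the truncated Koszul resolution and the estimate on the degrees appearing in $\HomMF(\bE',\bF)$. Moreover, the intermediate step you interpose, invoking on each affine piece $U_\sigma$ an equivalence $[MF(U_\sigma,\cL|_{U_\sigma},W|_{U_\sigma})]\cong\Dsg(Y\cap U_\sigma)$, is both unnecessary and invalid in the generality of the theorem: Theorem~\ref{introthm1} makes no regularity hypothesis on $X$ and does not even assume $W$ is a regular section, so neither the zero subscheme $Y$ nor the affine equivalence~\eqref{affine_equiv} is available. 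The paper's proof never passes through singularity categories; it stays entirely at the level of the mapping complex $\HomMF$ and its sheaf and hypercohomology. Similarly, your descent argument refers to ``locally free cokernel'' where the relevant condition defining $[MF]$ is ``locally contractible''; these coincide only when $W$ is regular (Proposition~\ref{affine_loc_acyclic}), so for the theorem as stated you should argue directly from local contractibility (as in Lemma~\ref{Lem1}) rather than through cokernels.
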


Using this concrete description of the morphisms in the homotopy
category, we are able to give another proof of (an analogue of) \cite[Theorem
3.4]{1101.4051} (which assumed that $\cL = \cO_X$) when $X$ is
projective over a Noetherian ring and $\cL = \cO_X(1)$.

\begin{introthm}
\label{introthm2}
Let $X$ be a scheme that is projective over a Noetherian ring of
finite Krull dimension,
$\cL = \cO_X(1)$ the corresponding very ample line bundle, and $W$ a
regular global section of $\cL$.
Define
$i: Y \into X$ to be zero subscheme of $W$. There is a 
functor
$$
\ocoker: [MF(X, \cL, W)] \to \Dsing(Y),
$$
which sends a  matrix factorization $(\cE_1 \xra{e_1} \cE_0 \xra{e_0}
\cE_1 \otimes \cL)$ to $\coker e_1$,
and which is fully faithful. The essential image is given by the objects $\cC$
in $\Dsing(Y)$ such that $i_* \cC$ is perfect on $X$. In particular, if $X$
is regular then $\ocoker$ is an equivalence.
\end{introthm}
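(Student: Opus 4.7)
The plan is to use the hypercohomology description of morphisms from Theorem \ref{introthm1} to compare $\Hom$-groups directly with those in $\Dsing(Y)$, and to produce matrix factorizations from the right-hand side by truncating finite locally free resolutions.

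First I would check that $\ocoker$ is a well-defined functor on $[MF(X, \cL, W)]$. For a matrix factorization $\bE = (\cE_1 \xra{e_1} \cE_0 \xra{e_0} \cE_1 \otimes \cL)$, regularity of $W$ forces $e_1$ to be injective, so there is a short exact sequence $0 \to \cE_1 \to \cE_0 \to i_* \ocoker \bE \to 0$ of coherent $\cO_X$-modules; this length-one locally free resolution exhibits $i_* \ocoker \bE$ as perfect on $X$, and restricting $\bE$ to $Y$ shows that $\ocoker \bE$ admits a $2$-periodic totally acyclic resolution by locally free $\cO_Y$-modules, hence is MCM on $Y$. A morphism of matrix factorizations induces a morphism of cokernels, and a matrix factorization whose cokernel is locally free on $Y$ becomes perfect (hence zero) in $\Dsing(Y)$, so $\ocoker$ descends to the Verdier quotient.

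For full faithfulness, Theorem \ref{introthm1} identifies $\Hom_{[MF]}(\bE, \bF)$ with $\bH^0 \HomMF(\bE, \bF)$, computed on $X$. On the other hand, since $\ocoker \bE, \ocoker \bF$ are MCM sheaves on the locally hypersurface scheme $Y$, morphisms between them in $\Dsing(Y)$ are represented by the stable $\Hom$ group, which can be computed as $\bH^0$ on $Y$ of the sheafified mapping complex built from the complete $2$-periodic resolutions obtained by restricting $\bE$ and $\bF$ to $Y$. A natural comparison map exists because $\HomMF(\bE,\bF)|_Y$ is precisely this mapping complex. Proving the comparison is an isomorphism is the main obstacle: it amounts to showing that the hypercohomology on $X$ of $\HomMF(\bE, \bF)$ agrees with the hypercohomology of its restriction to $Y$, which I would attack by a spectral sequence and Serre vanishing (using very ampleness of $\cL$) to reduce to sufficiently positive twists, exactly where the comparison at the level of $\bH^0$ matches the naive matrix-factorization computation established in Theorem \ref{introthm1}.

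For the essential image, the short exact sequence above shows $i_*\ocoker\bE$ is perfect on $X$ for every matrix factorization $\bE$. Conversely, given $\cC \in \Dsing(Y)$ with $i_*\cC$ perfect on $X$, after replacing $\cC$ by an iterated syzygy shift we may assume it is represented by a coherent MCM sheaf $\cM$ on $Y$; the finite Krull dimension hypothesis ensures the syzygy process terminates. Since $i_*\cM$ has finite projective dimension over $\cO_X$ and $X$ has enough locally free coherent sheaves, truncating a finite locally free resolution yields a short exact sequence $0 \to \cE_1 \to \cE_0 \to i_*\cM \to 0$ with $\cE_0, \cE_1$ locally free on $X$. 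The lifting argument recalled in the introduction then upgrades this datum to a matrix factorization of $(X, \cL, W)$ whose cokernel is $\cM$. When $X$ is regular, every coherent sheaf on $X$ is perfect, so the perfectness hypothesis on $i_*\cC$ is automatic and $\ocoker$ is an equivalence.
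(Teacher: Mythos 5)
Your overall architecture mirrors the paper's: use Theorem~\ref{introthm1} to rewrite $\Hom_{[MF]}$ as hypercohomology of the mapping complex, relate this to $\Dsing(Y)$ morphisms, and for essential surjectivity onto the relatively perfect objects take a deep enough syzygy so the standard lifting argument applies. The essential-image half and well-definedness are essentially correct (one small adjustment: the condition the paper actually arranges via syzygies is that $i_*\cM$ has projective dimension at most one on $X$, not that $\cM$ is MCM on $Y$ --- the two are equivalent only under Cohen--Macaulay hypotheses on $X$ that the paper does not impose).

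The genuine gap is in the full-faithfulness argument, where you write that morphisms in $\Dsing(Y)$ between cokernels of matrix factorizations ``are represented by the stable $\Hom$ group, which can be computed as $\bH^0$ on $Y$'' of a mapping complex, and then identify the remaining obstacle as comparing hypercohomology on $X$ with hypercohomology on $Y$. That last comparison is in fact trivial: the paper's Lemma~\ref{sheaf_ext}(3) gives a quasi-isomorphism $\HomMF(\bE,\bF)\to i_*\uHom_{\cO_Y}(i^*\bE,\coker\bF)$, and since $i_*$ is exact this immediately identifies $\bH^*$ on $X$ with $\bH^*$ on $Y$. What is \emph{not} trivial, and what you are treating as a known ``stable $\Hom$'' computation, is the identification
\[
\bH^0\bigl(Y,\uHom_{\cO_Y}(i^*\bE,\coker\bF)\bigr)\;\cong\;\Hom_{\Dsing(Y)}(\coker\bE,\coker\bF).
\]
Here $Y$ is neither affine nor assumed Gorenstein, so one cannot invoke Buchweitz's theorem or any off-the-shelf stable-module description of $\Dsing(Y)$. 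The paper proves this identification (Theorem~\ref{hom_sing_cat}) by a multi-step argument: Lemma~\ref{lem813} shows $\Hom_{D^b(Y)}(\coker\bE,\cN[m])\to\Hom_{\Dsing(Y)}(\coker\bE,\cN[m])$ is an isomorphism for $m\gg 0$; Lemmas~\ref{sheaf_ext}(4) and~\ref{hom_global_sections_commute_high_enough} use Serre vanishing to collapse the local-to-global and Cech spectral sequences for $m\gg 0$, yielding Lemma~\ref{hom_sing_isom}; and then, crucially, a Koszul-resolution descending-induction bootstraps the isomorphism from $m\gg0$ down to every $m$, including $m=0$. Your plan to ``reduce to sufficiently positive twists'' by Serre vanishing addresses the $m\gg0$ range but provides no mechanism to return to $m=0$; that bootstrapping step is the crux, and it is absent from the proposal.
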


In \cite{1101.4051} Orlov remarks that it would be
interesting to understand the difference between $[MF(X, \cL, W)]$ and
$\Dsing(Y)$. The above theorem shows that, when $X$ is projective over
an affine scheme and $\cL = \cO_X(1)$, the
difference is exactly the class of objects in $\Dsing(Y)$ that are
not perfect over $X$.

We came to these results studying affine complete intersection
rings. Let $Q$ be a regular ring,
$f_1, \dots, f_c$ a regular sequence of elements in $Q$, and set $R =
Q/(f_1, \ldots, f_c)$. Define $X = \bP^{c-1}_Q =
\Proj Q[T_1, \dots, T_c]$, $\cL = \cO_X(1)$, $W = \sum_i f_iT_i
\in \Gamma(X, \cL)$ and $Y \into X$ the zero subscheme of
$W$. Orlov showed in \cite[Theorem 2.1]{MR2437083} that there is an equivalence
\[ \Dsing(R) \xra{\cong} \Dsing(Y). \] 
Composing this with the equivalence of Theorem 2 we obtain an
equivalence 
$$
\Dsing(R)
\xra{\cong} [MF(X, \cL, W)].
$$ 
In a companion paper to this one we
will use this equivalence and the explicit description of the
Hom-sets in $[MF(X, \cL, W)]$ given here to study the
cohomology of modules over $R$.

\begin{ack}
  The authors thank Greg Stevenson for carefully reading a preliminary
  version of this paper and offering helpful comments on it.
\end{ack}

\section{The generalized category of matrix factorizations}
Throughout $X$ will denote a Noetherian separated scheme and $\cL$ a line
bundle on $X$. To simplify notation, even if $\cL$ is not very ample, for a quasi-coherent sheaf $\cG$ (or a
complex of such) on $X$ and integer $n$, we will write
$\cG(n)$ for $\cG \otimes_{\cO_X} \cL^{\otimes n}$. 
(Recall $\cL^{\otimes -n} := \uHom_{\cO_X}(\cL^{\otimes n},
\cO_X)$ for $n \geq 1$.) In particular, $\cO(1) = \cL$.
Similarly, if $f$ is morphism of (complexes
of) quasi-coherent sheaves, then $f(1) = f \otimes
id_{\cL}$.  

The following definition first appeared in \cite{Polishchuk:2010ys}.
\begin{defn}
  Let $W$ be a global section of $\cL$. A {\em matrix factorization} $\E =  (\cE_1 \xra{e_1} \cE_0 \xra{e_0}
  \cE_1(1))$ of the triple $(X, \cL, W)$ consists of a pair of
  locally free coherent sheaves $\cE_1, \cE_0$ on $X$ and morphisms $e_1: \cE_1 \to \cE_0$ and $e_0: \cE_0 \to \cE_1(1)$
  such that $e_0 \circ e_1$ and $e_1(1) \circ e_0$ are multiplication
  by $W$. A {\em strict morphism} of matrix factorizations from
  $(\cE_1 \to \cE_0  \to \cE_1(1))$ to 
  $(\cF_1 \to \cF_0  \to \cF_1(1))$ is a pair of maps $\cE_0 \to
  \cF_0, \cE_1 \to \cF_1$ causing the evident pair of squares to 
  commute. Matrix factorizations and strict morphisms of such form a category which we write
$MF(X, \cL, W)_\exact$ or just $MF_\exact$ for short.

The larger category, with objects matrix factorizations of
arbitrary coherent sheaves and arrows strict morphisms defined in
the same way as above, is an abelian category. The category 
$MF_\exact$ is a full subcategory of this abelian category and is closed under
  extensions, and hence $MF_\exact$ has the structure of  an \emph{exact
    category} in the sense of Quillen \cite{MR0338129}. A sequence $0 \to \E'
  \to \E \to \E'' \to 0$ in
  $MF_\exact$ is a short exact sequence if it determines a
 short  exact sequence of locally free coherent sheaves in both
  degrees. 
\end{defn}

\begin{defn}
\label{twisted_periodic_def} 
A \emph{twisted periodic  complex  of locally free coherent sheaves}
for $(X, \cL)$ 
is a chain complex $\cC$ of locally free coherent sheaves on $X$ together
with a specified isomorphism $\alpha: \cC[2] \xra{\cong} \cC(1)$, 
where we use the convention that $\cC[2]^i =
\cC^{i+2}$. The category $\TP X \cL$ has as objects twisted periodic
complexes and a morphism of such objects is a chain map that commutes
with the isomorphisms in the evident sense. There is an equivalence
\[ \TP X \cL \cong MF(X, \cL, 0)_\exact\]
given by sending $(\cC, \alpha)$ to $\cC^{-1} \xra{d}  \cC^0 \xra{\alpha^{-1} \circ d} \cC^{-1}(1)$.
\end{defn}

The most important example of a twisted periodic complex, for us, is
the following:
\begin{defn}
\label{def_hom_complex}
 Let $\E =  (\cE_1 \xra{e_1} \cE_0 \xra{e_0} \cE_1(1))$ and $\F =
 (\cF_1 \xra{f_1} \cF_0 \xra{f_0} \cF_1(1))$ be matrix
 factorizations for $(X, \cL, W)$. We define the {\em mapping complex} of $\bE, \bF$,
 written $\HomMF(\E, \F)$, to be the following twisted periodic complex of locally free sheaves:
\begin{equation*}
{\small
\ldots \xra{\partial^0(-1)}
\begin{matrix}
  \uHom(\cE_0, \cF_1) \\
  \oplus \\
  \uHom(\cE_1, \cF_0(-1))
\end{matrix}\xra{\partial^{-1}}
\begin{matrix}
  \uHom(\cE_0, \cF_0) \\
  \oplus \\
  \uHom(\cE_1, \cF_1)
\end{matrix}
\xra{\partial^{0}}
\left(\begin{matrix}
  \uHom(\cE_0, \cF_1) \\
  \oplus \\
  \uHom(\cE_1, \cF_0(-1))
\end{matrix}
\right)(1)
\xra{\partial^{-1}(1)} \ldots
}
\end{equation*}

Here, $\uHom$ denotes the sheaf of homomorphisms between two coherent sheaves on $X$  and $ \uHom(\cE_0, \cF_0)
  \oplus
  \uHom(\cE_1, \cF_1)$ lies in degree $0$. 
The differentials are given by
$$
\partial^{-1} = 
\begin{bmatrix}
(f_1)_* & -e_0^* \\
-e_1^* & (f_0)_* \\
\end{bmatrix}
\quad \text{ and } \quad
\partial^{0} = 
\begin{bmatrix}
(f_0)_* & e_0^* \\
e_1^* & (f_1)_* \\
\end{bmatrix},
$$
using the canonical isomorphisms 
$$
\uHom(\cE_i, \cF_j(1))  \cong
\uHom(\cE_i, \cF_j)(1)  \cong
\uHom(\cE_i(-1), \cF_j)
$$ 
and
$$
\left(\begin{matrix}
  \uHom(\cE_0, \cF_1) \\
  \oplus \\
  \uHom(\cE_1, \cF_0(-1))
\end{matrix}
\right)(1)
\cong
\begin{matrix}
  \uHom(\cE_0, \cF_1(1)) \\
  \oplus \\
  \uHom(\cE_1, \cF_0).
\end{matrix}
$$
One checks that $\partial^0 \circ \partial^{-1}$ and $\partial^{-1}(1)
\circ \partial^0$ are both $0$, and hence 
$\HomMF(\bE, \bF)$ is in fact a twisted periodic complex.
\end{defn}

Note that there is an isomorphism
\[ \Hom_{MF_\exact}(\E, \F) \cong Z^0( \Gamma(X,\HomMF(\E, \F)) )\]
where $\Gamma(X,\HomMF(\E, \F))$ is the complex of abelian groups
obtained by applying the global sections functor 
degree-wise to $\HomMF(\E, \F)$, and $Z^0$ denotes the cycles in degree 0.

\begin{defn}
A strict morphism $(g_1, g_0): \bE \to \bF$ is {\em nullhomotopic} if there are maps $s: \cE_0 \to
  \cF_1$ and $t: \cE_1(1) \to \cF_0$ as in the diagram below
\[ \xymatrix{
\cE_1 \ar[r]^{e_1} \ar[d]_{g_1}  & \cE_0 \ar@{-->}[ld]_{s} \ar[r]^{e_0}
\ar[d]^{g_0} &  \cE_1(1) \ar@{-->}[ld]_{t} \ar[d]^{g_1(1)}  \\
\cF_1 \ar[r]_{f_1} & \cF_0 \ar[r]_{f_0} & \cF_1(1) \\
}\]
such that
$$
g_1 = s\circ e_1 +
f_0(-1) \circ t(-1) \quad \text{and} \quad g_0 = f_1 \circ s + t \circ e_0.
$$
Two strict morphisms are \emph{homotopic} if their difference is nullhomotopic.

The {\em naive homotopy category} of matrix factorizations, written
$$
[MF(X, \cL, W)]_{\naive},
$$
is the category with the same objects as $MF(X, \cL,W)_\exact$ and
arrows given by strict morphisms modulo homotopy. Equivalently, for objects $\E, \F$:
$$
\Hom_{[MF]_\naive}(\bE, \bF) = H^0 \Gamma(X, \HomMF(\E, \F)).
$$ 
\end{defn}

\begin{defn}
  The \emph{shift functor} on $[MF(X, \cL, W)]_\naive$, written
  $[1]$, is the endo-functor given on objects by
$$
\left(\cE_1 \map{e_1} \cE_0 \map{e_0} \cE_1(1)\right)[1] =
\left(\cE_0 \map{-e_0} \cE_1(1) \map{-e_1(1)} \cE_0(1)\right).
$$
The \emph{cone} of a strict morphism $f = (g_1, g_0): \bE \to \bF$ is the
matrix factorization
\[ \cone(f) = \left ( \cE_0 \oplus \cF_1 \xra{\tiny{
  \left [ \! \begin{array}{rc}
    -e_0 & 0 \\
g_0 & f_1
  \end{array} \! \right ]}} \cE_1(1)
\oplus \cF_0
 \xra{ \tiny{
  \left [ \! \begin{array}{rc}
    -e_1(1) & 0 \\
g_1(1) & f_0
  \end{array} \! \right ]}
} \cE_0(1) \oplus \cF_1(1) \right ).
\]
There are maps $\bF \to \cone(f) \to \bE[1]$ defined in the usual
manner, and we define a 
{\em distinguished triangle} to be a triangle in $[MF]_\naive$
isomorphic to one of the form
\[
\bE \xra{f} \bF \to \cone(f) \to \bE[1].
\]
As remarked in \cite{Polishchuk:2010ys}, these structures make
$[MF(X, \cL,
W)]_\naive$ into a triangulated category, a fact one can check
directly by mimicking the proof for the homotopy category
of complexes in an abelian category.
\end{defn}

\begin{ex}
 Let $Q$ be a Noetherian ring, $X = \Spec Q, \, \cL = \cO_X,$ and
 $W$ an element of $Q$. Then a matrix factorization is a pair of
 projective $Q$-modules $E_1, E_0$ and maps 
\[ \xymatrix{ E_1 \ar@<.5ex>[r] & \ar@<.5ex>[l] E_0}\]  such that composition in either direction is multiplication by $W$.
The category $[MF(X, \cL,
W)]_{\naive}$ is the \emph{homotopy category of matrix
  factorizations}, as defined for instance in \cite[3.1]{MR2101296},
where it is denoted $HMF(Q, W)$.
\end{ex}

For a point $x \in X$, we may localize an object $\E$ of
  $MF(X, \cL, W)_\exact$ at $x$ in the evident manner to obtain an object of
  $MF(\cO_{X,x}, \cL_x, W_x)_\exact$. It
is clear that the functor
$$
MF(X, \cL, W)_\exact \to MF(\cO_{X,x}, \cL_x, W_x)_\exact, \, \bE \mapsto \bE_x
$$
preserves homotopies, and in this way we obtain a triangulated functor
$$
[MF(X, \cL, W)]_\naive \to [MF(\cO_{X,x}, \cL_x, W_x)]_\naive.
$$

\begin{defn}
A map of matrix factorizations $\E \to \E'$ in $[MF(X, \cL,
W)]_\naive$
is a {\em weak
  equivalence}  if for each $x \in X$, the map $\E_x \to \E'_x$ is an
isomorphism in the category $[MF(\cO_{X,x}, \cL_x,
W_x)]_\naive$.

A matrix factorization $\F$ is {\em locally contractible} if the
unique map $0 \to \F$ is a weak
equivalence. This is equivalent to the condition that 
$\F_x$ is contractible for all $x \in X$.
\end{defn}

\begin{rem}
It is asserted in \cite[2.6]{1101.5847} that there is a model category
structure for the evident generalization of the notion of matrix
factorizations for $(X, \cL, W)$ 
in which the objects involve a pair of arbitrary quasi-coherent
sheaves. This larger category, which is closed under all small
coproducts, plays an important role in the work of Positselski
\cite{1102.0261} and Lin and Pomerleano \cite{1101.5847}.
\end{rem}

For a global section $W$ of a line bundle $\cL$, the \emph{zero
  subscheme of $W$} is the subscheme of $X$ determined by the ideal sheaf given as
the image of $W^\vee:
\cL^\vee \to \cO_X$.
\begin{prop}
\label{affine_loc_acyclic}
Assume $X$ is a Noetherian scheme, $\cL$ is a line bundle on $X$, and
$W$ is a
regular global section of $\cL$, i.e. the map $W: \cO_X \to \cL$ is
injective. Let $i: Y \into X$ be the zero subscheme of $W$. Consider the following conditions on a matrix factorization
$\bE = (\cE_1 \xra{e_1} \cE_0 \xra{e_0} \cE_1(1)):$
\begin{enumerate}
\item $\bE \cong 0$ in the category
$[MF(X, \cL, W)]_{\naive}$.

\item The canonical surjection $p: i^* \cE_0 \onto i^* \coker(e_1)$ of
  coherent $\cO_Y$-sheaves splits.

\item $\bE$ is locally contractible.

\item $i^* \coker(e_1)$ is a locally free coherent sheaf on $Y$.

\end{enumerate}
In general, we have $(1) \Rightarrow (2)\Rightarrow (3) \Longleftrightarrow (4)$. If $X$ is affine then all four conditions are
equivalent.
\end{prop}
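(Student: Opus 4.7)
The plan is to establish the chain $(1) \Rightarrow (2) \Rightarrow (3) \Leftrightarrow (4)$ by separate arguments, reducing the stalkwise piece to a standard local statement for matrix factorizations over a Noetherian local ring; the affine-case equivalence then follows from a finiteness/local-to-global argument. For $(1) \Rightarrow (2)$, I would start from a nullhomotopy $(s, t)$ of $\id_\bE$, so that $\id_{\cE_0} = e_1 \circ s + t \circ e_0$ and $\id_{\cE_1} = s \circ e_1 + e_0(-1) \circ t(-1)$. Setting $\alpha := i^* e_1 \circ i^* s$ and $\beta := i^* t \circ i^* e_0$ in $\uEnd(i^* \cE_0)$, applying $i^*$ to the first identity gives $\alpha + \beta = \id$; and using that $i^*$ annihilates multiplication by $W$ --- so that $i^*(e_1 \circ e_0(-1)) = 0 = i^*(e_0 \circ e_1)$ --- a short computation with the second identity yields $\alpha^2 = \alpha$, so $\beta$ is the complementary idempotent. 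A second computation shows that $\alpha$ acts as the identity on $\image(i^* e_1) = \ker(p)$, whence $i^* \cE_0 = \ker(p) \oplus \beta(i^* \cE_0)$ and the composite $\beta(i^* \cE_0) \hookrightarrow i^* \cE_0 \xra{p} i^* \coker(e_1)$ is an isomorphism, furnishing a splitting of $p$.

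For $(2) \Rightarrow (3)$ and $(3) \Leftrightarrow (4)$, I would argue at each stalk $x \in X$. When $x \notin Y$ the element $W_x$ is a unit in $\cO_{X,x}$, so locally trivializing $\cL$ and taking $(s,t) = (W_x^{-1} e_0, 0)$ gives an explicit contracting homotopy of $\bE_x$, while $i^* \coker(e_1)$ has zero stalk at $x$; thus all three conditions hold trivially there. When $x \in Y$, the three conditions become statements about the localized matrix factorization $\bE_x$ over the Noetherian local ring $Q = \cO_{X,x}$ with nonzerodivisor $W_x$ and cokernel $M := (\coker e_1)_x$: namely, (2) says the surjection $E_{0,x}/W E_{0,x} \twoheadrightarrow M$ splits (so $M$ is $Q/W$-projective, hence free by locality), (4) says $M$ is $Q/W$-free, and (3) says $\bE_x$ is contractible. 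The chain then closes via the classical local result that a matrix factorization over a Noetherian local ring whose cokernel is free over $Q/W$ is contractible --- proved by lifting a basis of $M$ to $E_{0,x}$ via Nakayama and iteratively splitting off trivial summands of the form $(Q \xra{W} Q \xra{1} Q)$ and $(Q \xra{1} Q \xra{W} Q)$ --- together with its converse, which is simply the localization of $(1) \Rightarrow (2)$.

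Finally, for the equivalence in the affine case I would prove $(3) \Rightarrow (1)$ by a local-to-global argument. When $X = \Spec A$, the complex $\Gamma(X, \HomMF(\bE, \bE))$ has finitely generated projective $A$-modules in each degree, so $\Hom_{[MF]_\naive}(\bE, \bE) = H^0 \Gamma(X, \HomMF(\bE, \bE))$ is a finitely generated $A$-module whose formation commutes with localization at primes. By (3), the class $[\id_\bE]$ localizes to zero at every point of $X$ and hence vanishes globally, giving $\bE \cong 0$ in $[MF(X, \cL, W)]_\naive$. I expect the main obstacle to be the classical local lemma invoked in the second paragraph --- that $Q/W$-projective cokernel forces contractibility of the matrix factorization over a Noetherian local ring --- which, while a standard part of Eisenbud's theory of matrix factorizations, demands some care with Nakayama-type bookkeeping to split off the trivial summands cleanly.
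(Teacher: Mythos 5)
Your proof is correct, and its overall skeleton matches the paper's: (1)$\Rightarrow$(2) is extracted from a nullhomotopy, (3)$\Leftrightarrow$(4) is reduced to the stalkwise statement for a local ring, and everything ultimately rests on the classical local fact that a matrix factorization over a Noetherian local ring whose cokernel is free over $Q/(W)$ is contractible. Two points of divergence are worth noting. First, for (1)$\Rightarrow$(2) the paper does not bother with idempotents: from the homotopy $(s,t)$ it directly observes that $i^*(te_0)$ kills $\image(i^*e_1)$, so it factors as $j \circ p$ for some $j: i^*\coker(e_1) \to i^*\cE_0$, and then a one-line computation ($pjp = p \circ i^*(te_0) = p \circ i^*(\id - e_1 s) = p$) shows $pj=\id$. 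Your idempotent argument is a valid but slightly longer repackaging of the same idea. Second, and more substantively, for closing the loop in the affine case the paper simply cites Orlov's result (\cite[3.8]{MR2101296}) to get (4)$\Rightarrow$(1) directly, whereas you instead prove (3)$\Rightarrow$(1) by a local-to-global argument: since $\Hom_{[MF]_\naive}(\bE,\bE) = H^0\Gamma(X,\HomMF(\bE,\bE))$ is an $A$-module whose formation commutes with localization, and $[\id_\bE]$ dies at every stalk by hypothesis, it vanishes globally. This is a clean, self-contained alternative that confines the appeal to the classical lemma to the purely local setting rather than the global affine one. The one place you would need to do real work is the ``classical local result'' that you only sketch (cokernel free implies contractible over a Noetherian local ring, via Nakayama and peeling off trivial summands $(Q\xra{W}Q\xra{1}Q)$); as you candidly observe, this requires some bookkeeping. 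This is precisely the point the paper outsources by citing Orlov's 3.8, and you should either cite the same source or write out that lemma carefully.
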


\begin{rem} Note that multiplication by $W$ annihilates $\coker(e_1)$, and
  hence $i_* i^* \coker(e_1) \cong \coker(e_1)$ via the canonical
  map.
\end{rem}
 
\begin{proof}
We first observe that the proof of \cite[3.8]{MR2101296} applies to show
that (4) implies (1) when $X$ is affine.

We next prove that (1) implies (2).
Let $\cM = i^* \coker(e_1) = \coker(i^* e_1)$.
If $\bE \cong 0$, then $\bE$ has a contracting homotopy, given by $s:
\cE_0 \to \cE_1$ and  $t: \cE_1(1) \to \cE_0$ satisfying $e_1s + te_0
= id$ and $s(1)e_1(1) + e_0t = id$.
As in
the argument in the proof of \cite[3.7]{MR2101296}, since $i^*(t e_0 e_1) =
0$ and $\cM$ is the cokernel of $i^*(e_1)$, there is a map $j: \cM \to
i^*\cE_0$ such that $j p = i^*(te_0)$. But then
$$
pjp = pi^*(te_0) =  pi^*(id - e_1s) = p
$$
since $p i^*(e_1) = 0$. Since $p$ is onto, we have $pj = id_\cM$.

If $(i^* \coker \bE)_x$ is a free
$\cO_{Y,x}$ module for all $x \in Y$, then, since (4) implies (1) in
the affine case, we conclude that $\bE_x = 0 \in [MF(\cO_{X,x}, \cL_X,
W_x)]$. This proves that (4) implies
(3). It is clear that (2) implies (4) and hence that (2) implies (3).

Finally, if $\bE_x \cong 0$, then since (1) implies (2), we see that the map
$p_x: (i^* \cE_0)_x \onto (i^* \coker(e_1))_x$ splits and hence $(i^*
\coker(e_1))_x$ is free. This proves (3) implies (4).
\end{proof}

The following shows that a locally contractible matrix
factorization need not be $0$ in $[MF]_\naive$.

\begin{ex}
\label{non-affine-ex}
Under the assumption of Proposition \ref{affine_loc_acyclic},
suppose that $\bE = (\cE_1 \to \cE_0 \to \cE_1(1))$ is
a matrix factorization of $(X, \cL, W)$ such that $i^* \cM :=
i^* \coker(\cE_1 \to \cE_0)$ is a locally
free coherent $\cO_Y$-sheaf,
but that the surjection $i^* \cE_0 \onto i^* \cM$ does not
split.
Then by Proposition \ref{affine_loc_acyclic}, 
$\bE$ is locally contractible, but
$\bE$ is not isomorphic
to $0$ in $[MF]_\naive$.
Such examples are common. For instance, take $X =
\bP^2_k = \Proj k[T_0, T_1, T_2]$, $\cL = \cO_X(1)$ and $W = T_2$,  so
that $Y = \bP^1_k = \Proj k[T_0, T_1]$. Then let
$\cM = \cO_Y$, $\cE_0 = \cO_X(-1)^2$ and $\cE_0 \onto i_* \cM$ be the
composition of
$$
\cE_0 \xra{\text{can}} i_* \cO_Y(-1)^2 \xra{(T_0, T_2)} i_*\cO_Y.
$$
Then the kernel $\cE_1$ of $\cE_0 \onto i_* \cM$ is locally free and,
using the argument found in the introduction, this leads to a matrix factorization of $(X,
\cL, W)$ with $i^* \coker(\cE_1 \to \cE_0) = \cM$.
But the surjection
$$
i^* \cE_0  = \cO_Y(-1)^2 \xra{(T_0, T_1)} \cO_Y = \cM
$$
does not split.
\end{ex} 

The collection of locally contractible objects
is the intersection
of the kernels of 
the triangulated functors
$$
[MF(X, \cL, W)]_\naive
\to
[MF(\cO_{X,x}, \cL_x, W_x)],
$$
as $x$ ranges over all points of $X$. Recall that a triangulated subcategory
of a triangulated category is thick if it is closed under direct
summands. The kernel of any triangulated functor is thick and
an arbitrary intersection of thick subcategories is thick. Thus
the collection of locally contractible objects forms a thick subcategory of $[MF(X,
\cL, W)]_\naive$.

The following category, whose definition is originally due to Orlov
and appeared, for example,  in \cite{Polishchuk:2010ys},
is the central object of study in this paper:
\begin{defn} The \emph{homotopy category of matrix factorizations},
  written $[MF(X, \cL, W)]$, is the Verdier
  quotient of $[MF(X, \cL, W)]_\naive$ by the thick subcategory of
  locally contractible objects:
$$
[MF(X, \cL, W)] = \frac{[MF(X, \cL, W)]_\naive}{\text{locally
    contractible objects}}.
$$
\end{defn}

\begin{rem}
  A strict map $\E' \to \E$ is a weak equivalence if and only if it
  fits into a distinguished triangle
$$
\E' \to \E \to \F \to \E'[1]
$$
in $[MF(X, \cL, W)]_\naive$ such that $\F$ is locally
contractible. Thus weak equivalences are invertible in $[MF(X, \cL,
W)]$.
\end{rem}

\begin{ex} \label{L713}
If $X$ is affine, $\cL = \cO_X$  and $W$ is a
non-zero-divisor, 
then $[MF(X, \cL, W)] = [MF(X,
\cL, W)]_\naive$ by Proposition \ref{affine_loc_acyclic}.
\end{ex}

\section{Another version of the homotopy category}
\label{another_homotopy}
The aim of this section is to describe another triangulated
category associated to $MF(X, \cL, W)$, which we write as
$[MF(X, \cL ,W)]_\bH$.  
In the next section, we prove that when $X$ is projective over
a Noetherian ring and $\cL$ is the corresponding very ample line bundle,
$[MF]_\bH$ and $[MF]$ are equivalent. The advantage $[MF]_\bH$ enjoys over $[MF]$ is that its
$\Hom$ sets are more explicit.

We make a fixed choice of a
  finite affine open cover $\cU = \{U_1, \dots, U_m\}$ of $X$, and for
  any quasi-coherent sheaf $\cF$ on $X$, let $\Gamma(\cU, \cF)$ denote
  the cochain complex given by the usual Cech construction. Since $X$ is separated, the cohomology
  of the complex
  $\Gamma(\cU, \cF)$ gives the sheaf cohomology of $\cF$.
We  define
 $\Gamma(\cU, \HomMF(\E, \F))$ to be
 the total complex associated to the bicomplex
$$
0 \to \bigoplus_i \Gamma(U_i,\HomMF(\E,
  \F)) \to \bigoplus_{i<j} \Gamma(U_i \cap U_i,\HomMF(\E, \F)) \to \cdots
$$
given by applying the Cech construction degree-wise.
If $\G$ is another matrix factorization, there is an evident morphism
of chain complexes
$$
\Gamma(\cU, \HomMF(\E, \F)) \otimes \Gamma(\cU, \HomMF(\F,
\G)) \to \Gamma(\cU, \HomMF(\E, \G))
$$
which one can check is associative and unital. Thus $MF(X, \cL, W)$,
with function spaces
$\Gamma(\cU, \HomMF(\E,\F))$, is a DG category.
We set \[ \bH^q(X, \HomMF(\E, \F)) = H^q( \Gamma(\cU, \HomMF(\E, \F))
). \]

There is a convergent spectral
sequence
$$
H^p(X, \cH^q(\HomMF(\E, \F))) \Longrightarrow \bH^{p+q}(X, \HomMF(\E, \F))
$$
where $\cH^q$ is the $q$-th cohomology sheaf of a complex.
In particular, 
if $\HomMF(\E, \F) \to \HomMF(\E', \F')$ is a 
quasi-isomorphism,
then the map
$$
\bH^n(X, \HomMF(\E, \F)) \to \bH^n(X, \HomMF(\E', \F'))
$$
is an isomorphism for all $n$.

\begin{defn}
Define the category
$$
[MF(X, \cL, W)]_{\bH}
$$
whose objects are matrix factorizations
and whose morphisms are
$$
\Hom_{[MF]_{\bH}}(\E, \F) = \bH^0(X, \HomMF(\E, \F)).
$$
Thus $[MF]_{\bH}$ is the homotopy category associated to the
DG category above.
\end{defn}

\begin{rem}
 This definition was inspired by Shipman's
 category of \emph{graded D-Branes} \cite{1012.5282}, who was in turn
 inspired by Segal's \cite{Segal}.
\end{rem} 

There is a canonical functor
\begin{equation} \label{E815}
[MF(X, \cL, W)]_\naive \to [MF(X, \cL, W)]_{\bH}
\end{equation}
that is the identity on objects and is given on morphisms by the
canonical map
$$
H^0 \Gamma(X, \HomMF(\bE, \bF)) \to \bH^0(X, \HomMF(\bE, \bF)).
$$

\begin{ex}
If $X$ is affine this
  functor is an equivalence since each map \[H^0 \Gamma(X, \HomMF(\bE,
  \bF)) \to \bH^0(X, \HomMF(\bE, \bF))\] is an isomorphism.  If we
  further assume $\cL = \cO_X$ and $W$ is a non-zero-divisor of $Q$,
then both
$[MF(X, \cL, W)]_\naive$ and $[MF(X, \cL, W)]_{\bH}$
  are equivalent to 
$[MF(X, \cL, W)]$ by Example \ref{L713}.
\end{ex}

\begin{lem} \label{Lem1}
If a strict morphism $f: \bE' \to \bE$ of matrix
  factorizations is a weak equivalence, then 
for all matrix factorizations $\bF$, the induced
map on mapping complexes
$$
\HomMF(\bE, \bF) \to
\HomMF(\bE', \bF)
$$ 
is a quasi-isomorphism in $TPC(X, \cL)$. In particular, the map in $[MF(X, \cL, W)]_{\bH}$ induced by $f: \bE' \to \bE$ is an isomorphism.
\end{lem}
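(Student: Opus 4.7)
The plan is to use the distinguished triangle containing $f$ whose third term is locally contractible, apply $\HomMF(-, \bF)$ termwise to obtain a short exact sequence of twisted periodic complexes, and reduce the claim to showing that $\HomMF$ of a locally contractible matrix factorization has vanishing cohomology sheaves. The latter is then verified pointwise by localization.

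First I would note that since $f$ is a weak equivalence, the remark preceding the lemma produces a distinguished triangle
$$
\bE' \xra{f} \bE \to \cone(f) \to \bE'[1]
$$
in $[MF(X, \cL, W)]_\naive$ with $\cone(f)$ locally contractible. As a graded object, $\cone(f) = \bE \oplus \bE'[1]$, so the short exact sequence of matrix factorizations underlying this triangle is split in each degree. Applying $\HomMF(-, \bF)$---which is built from the additive functor $\uHom(-, -)$ applied termwise---therefore yields a \emph{degreewise split} short exact sequence of twisted periodic complexes
$$
0 \to \HomMF(\cone(f), \bF) \to \HomMF(\bE, \bF) \xra{f^{*}} \HomMF(\bE', \bF) \to 0.
$$
The associated long exact sequence of cohomology sheaves reduces the first assertion to showing that $\cH^q(\HomMF(\cone(f), \bF)) = 0$ for all $q$.

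For this, fix a point $x \in X$. Since each term of $\HomMF(\cone(f), \bF)$ is locally free, its cohomology sheaves commute with localization at $x$, giving
$$
\cH^q(\HomMF(\cone(f), \bF))_x \cong H^q(\HomMF(\cone(f)_x, \bF_x)) \cong \Hom_{[MF(\cO_{X,x}, \cL_x, W_x)]_\naive}(\cone(f)_x, \bF_x[q]).
$$
Local contractibility of $\cone(f)$ means $\cone(f)_x \cong 0$ in the local naive homotopy category, so all of these Hom groups vanish. This establishes the desired quasi-isomorphism in $TPC(X, \cL)$.

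For the last sentence, the convergent spectral sequence $H^p(X, \cH^q) \Rightarrow \bH^{p+q}$ recorded just before the definition of $[MF(X, \cL, W)]_\bH$ implies that quasi-isomorphisms of twisted periodic complexes induce isomorphisms on all $\bH^{*}$. Applying this to the Hom-complex for an arbitrary test object $\bG$ in place of $\bF$, precomposition with $f$ gives a natural isomorphism $\Hom_{[MF]_\bH}(\bE, \bG) \xra{\cong} \Hom_{[MF]_\bH}(\bE', \bG)$, so by Yoneda in $[MF(X, \cL, W)]_\bH$, $f$ is itself an isomorphism there. The main technical nuisance is verifying that the cone sequence is genuinely split termwise, so that $\HomMF(-, \bF)$ returns an honest short exact sequence of complexes rather than merely a fiber sequence; this is immediate from the block-matrix description of $\cone(f)$, and all remaining steps are formal consequences of the DG-categorical structure on $MF(X, \cL, W)$ and of pointwise contractibility.
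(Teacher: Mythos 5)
Your overall strategy is sound and arrives at the same key local computation as the paper, but you have mislabeled the short exact sequence that you obtain by applying $\HomMF(-,\bF)$. Having identified (correctly) that the underlying degreewise split short exact sequence of matrix factorizations is $0 \to \bE \to \cone(f) \to \bE'[1] \to 0$, the contravariant functor $\uHom(-,\bF)$ applied termwise produces
\[
0 \to \HomMF(\bE', \bF)[-1] \to \HomMF(\cone(f), \bF) \to \HomMF(\bE, \bF) \to 0,
\]
not the sequence you wrote with $\HomMF(\cone(f),\bF)$ as a subobject and $f^{*}$ as a surjection. Indeed, $f^{*}$ is not degreewise surjective in general (that would require $\cE'_i \to \cE_i$ to split), and what is true instead is that $\HomMF(\cone(f),\bF)$ is, up to sign and shift, the mapping cone of $f^{*}$. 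Fortunately this slip is harmless: the long exact sequence in cohomology sheaves attached to the \emph{correct} short exact sequence above has $f^{*}$ (up to sign) as its connecting map, so it still reduces the first assertion to the vanishing of $\cH^q(\HomMF(\cone(f), \bF))$ for all $q$, exactly as you claim, and the rest of your argument goes through.

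Compared with the paper's proof, your route is a mild detour. The paper localizes at $x$ immediately and uses that an isomorphism $\bE'_x \to \bE_x$ in the local naive homotopy category is a homotopy equivalence of the underlying DG-objects, so $\uHom_{MF}(\bE,\bF)_x \to \uHom_{MF}(\bE',\bF)_x$ is a quasi-isomorphism of complexes of stalks; since each term of $\HomMF$ is coherent locally free, this implies the map of sheaf complexes is a quasi-isomorphism. You instead pass through $\cone(f)$, reduce to the acyclicity of $\HomMF(\cone(f),\bF)$, and then perform the same localization to see that the stalkwise $\Hom$-complex of a contractible object is acyclic. Both arguments rest on the same local fact; yours costs a bit of extra bookkeeping with the cone but is equally valid. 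Your final paragraph (the spectral sequence upgrading the quasi-isomorphism of sheaf complexes to an isomorphism on $\bH^{\ast}$, followed by Yoneda in $[MF]_{\bH}$) is exactly the paper's argument.
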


\begin{proof}
For matrix factorizations $\E, \F$ and for all $x \in X$ there is an isomorphism
$$
\uHom_{MF(X, \cL, W)}(\E, \F)_x \cong
\uHom_{MF(\cO_{X,x}, \cL_x, W_x)}(\E_x, \F_x).
$$
Since we are assuming $\E'_x \to \E_x$ is an isomorphism in
$[MF(\cO_{X,x}, \cL_x, W_x)]$, it follows that
$$
\uHom_{MF(X, \cL, W)}(\E, \F)_x 
\to
\uHom_{MF(X, \cL, W)}(\E', \F)_x 
$$
is a quasi-isomorphism for all $\F$ and $x$. This proves 
$$
\HomMF(\bE, \bF) \to
\HomMF(\bE', \bF)
$$ 
is a quasi-isomorphism in $TPC(X, \cL)$  and hence that
$$
\bH^n(X, \HomMF(\bE, \bF)) \map{\cong}
\bH^n(X, \HomMF(\bE', \bF))
$$
is an isomorphism for all $n$. The case $n=0$ shows $\bE$ and $\bE'$ co-represent
the same functor on $[MF(X, \cL, W)]_{\bH}$ and hence are isomorphic. 
\end{proof}

The following is a formal consequence of the lemma:

\begin{prop} The functor 
$[MF(X, \cL, W)]_\naive \to [MF(X, \cL, W)]_{\bH}$ factors canonically as
$$
[MF(X, \cL, W)]_\naive \to [MF(X, \cL, W)] 
\to [MF(X, \cL, W)]_{\bH}.
$$
\end{prop}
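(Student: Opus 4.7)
The plan is to invoke the universal property of the Verdier quotient. Recall that $[MF(X,\cL,W)]$ is defined as the Verdier quotient of $[MF(X,\cL,W)]_\naive$ by the thick subcategory $\cN$ of locally contractible objects. The universal property says that any triangulated functor $F : [MF(X,\cL,W)]_\naive \to \cT$ that sends every object of $\cN$ to a zero object factors uniquely through the quotient functor $[MF(X,\cL,W)]_\naive \to [MF(X,\cL,W)]$. So it suffices to verify that the canonical functor \eqref{E815} kills locally contractible objects.

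First I would observe that $[MF(X,\cL,W)]_\bH$ inherits a triangulated structure from $[MF(X,\cL,W)]_\naive$, with the same shift and the same class of distinguished triangles (since the functor in \eqref{E815} is the identity on objects and is induced by the map from global sections to hypercohomology, which respects the bicomplex totalization used to form distinguished triangles on cones), and that the functor \eqref{E815} is then triangulated by construction. This is the routine verification.

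Next, the key point: if $\F$ is locally contractible, then by definition the unique map $0 \to \F$ is a weak equivalence. By Lemma \ref{Lem1}, any weak equivalence becomes an isomorphism in $[MF(X,\cL,W)]_\bH$. Therefore the map $0 \to \F$ is an isomorphism in $[MF(X,\cL,W)]_\bH$, which is exactly the statement that $\F$ becomes a zero object there. Hence the canonical functor $[MF(X,\cL,W)]_\naive \to [MF(X,\cL,W)]_\bH$ sends the thick subcategory of locally contractible objects to zero, and the universal property of the Verdier quotient produces the desired factorization
\[
[MF(X,\cL,W)]_\naive \to [MF(X,\cL,W)] \to [MF(X,\cL,W)]_\bH,
\]
with uniqueness (hence canonicity) built into the universal property.

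There is no real obstacle here; the proposition is genuinely a formal consequence of Lemma \ref{Lem1}. The only thing to be a little careful about is the triangulated structure on $[MF(X,\cL,W)]_\bH$ and the fact that \eqref{E815} is triangulated, but both follow immediately from the definitions since the mapping cone of a strict morphism is preserved and the Cech totalization used to form $\bH^\ast$ commutes with the formation of cones of chain maps.
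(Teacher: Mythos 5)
Your argument is correct and is essentially what the paper intends when it says the proposition is "a formal consequence of the lemma": Lemma \ref{Lem1} implies that the functor \eqref{E815} kills locally contractible objects, and the universal property of the Verdier quotient then gives the factorization. One small simplification worth noting: since $[MF(X,\cL,W)]$ is by construction the localization of $[MF(X,\cL,W)]_\naive$ at the class of weak equivalences (the morphisms whose cone is locally contractible), and Lemma \ref{Lem1} says directly that \eqref{E815} inverts every weak equivalence, the factorization already follows from the plain category-theoretic universal property of localization — there is no need to first equip $[MF(X,\cL,W)]_\bH$ with a triangulated structure or to verify that \eqref{E815} is triangulated. Your route via the triangulated universal property is valid too, but the extra verification you flag as "routine" can simply be avoided.
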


\section{Equivalence of homotopy categories}
The goal of this section is to prove Theorem \ref{introthm1}
of the introduction: the functor
$$
[MF(X, \cL, W)] \to [MF(X,
\cL, W)]_{\bH}$$ 
is an equivalence of categories when $X$ is projective over a
Noetherian ring and $\cL$ is the corresponding very ample line bundle. For such an $X$ and $\cL$, we will often use
Serre's Vanishing Theorem: for any coherent sheaf $\cF$ on $X$ and
all $n \gg 0$, $H^i(X, \cF(n) ) = 0$ for $i > 0$.

Given a bounded complex of strict morphisms of matrix factorizations
$$
\E^\bu := \left(\E^p\to \bE^{p+1} \to \cdots \to \E^q\right),
$$
we define its {\em total object}, written $\Tot(\E^\bu)$, as follows. 
We may visualize $\bE^\bu$ as a commutative diagram
$$
{\small \xymatrix{ \cdot & \cdots \ar[r] & \cE_{0}^q(-1) \ar[r] &
    \cE_{1}^q \ar[r] & \cE_{0}^q
    \ar[r] & \cE_{1}^q(1) \ar[r] & \cdots \\
    & \cdots \ar[r] & \cE^{q-1}_{0}(-1) \ar[r] \ar[u] & \cE^{q-1}_{1}
    \ar[r] \ar[u] & \cE^{q-1}_{0}
    \ar[r]  \ar[u] & \cE^{q-1}_{1}(1) \ar[r]  \ar[u] & \cdots \\
    \cdot \ar[uu]^0& \cdots \ar[r] & \cE^{q-2}_{0}(-1) \ar[r] \ar[u] &
    \cE^{q-2}_{1}\ar[r] \ar[u] & \cE^{q-2}_{0}
    \ar[r]  \ar[u] & \cE^{q-2}_{1}(1) \ar[r]  \ar[u] & \cdots \\
    & & \vdots \ar[u] & \vdots \ar[u] & \vdots \ar[u] &\vdots \ar[u] & \\
    & \cdots \ar[r] & \cE^{p}_{0}(-1) \ar[r] \ar[u] & \cE^{p}_{1}\ar[r]
    \ar[u] & \cE^p_0
    \ar[r]  \ar[u] & \cE^p_1(1) \ar[r]  \ar[u] & \cdots \\
    &&& \cdot \ar[rr]^{W} && \cdot \\
  } }.
$$
We form
$\Tot(\bE^\bu)$ by taking direct sums along lines of slope -1 in this
diagram, and maps are defined just as for the usual total complex
associated to a bicomplex.  The resulting chain of maps clearly
satisfies the required twisted periodicity, making it an object of
$MF(X, \cL, W)$.

A special case of the $\Tot$ construction will be especially
useful. First, for a locally free coherent sheaf $\cP$ on $X$ and a matrix
factorization $\bE$, let $\cP \otimes \bE$ denote the matrix
factorization obtained by applying the functor $\cP \otimes_{\cO_X} -$
to the data defining $\bE$.  If $\cP^\bu$ is a bounded complex of
locally free coherent sheaves on $X$, then
$$
\cP^\bu \otimes \bE = \left(\cP^p \otimes \bE \to \cP^{p+1} \otimes
  \bE \to \cdots \to \cP^q \otimes \bE\right)
$$
is a bounded complex of strict morphisms of matrix factorizations, and we may form
its associated total matrix factorization $\Tot(\cP^\bu \otimes \bE)$.

We will need the following lemma.

\newcommand{\cQ}{\mathcal{Q}}
\begin{lem} \label{Lem3} Let $X$ be a Noetherian separated scheme,
  $\cL$ any line bundle on $X$ and $W$ a global section of $\cL$.
If $\cP^\bu_1 \to \cP^\bu_2$ is a quasi-isomorphism
between bounded complexes of locally free coherent sheaves on $X$, then 
$$
\Tot(\cP^\bu_1 \otimes \E) \to
\Tot(\cP^\bu_2 \otimes \E)
$$
is a weak equivalence of matrix factorizations.
\end{lem}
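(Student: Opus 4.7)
The plan is to prove the lemma by reducing to the problem of showing that an acyclic complex of locally free sheaves, tensored with $\E$ and totalized, gives a locally contractible matrix factorization. I would then check this at stalks.

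First I would reduce to the acyclic case via the mapping cone. A strict morphism in $[MF(X,\cL,W)]_\naive$ is a weak equivalence iff its cone (in the triangulated sense defined earlier in the paper) is locally contractible. Both $-\otimes_{\cO_X}\E$ and $\Tot$ are additive functors and are easily checked to commute with the formation of mapping cones (of chain maps between bounded complexes of locally free sheaves on one side, and of strict morphisms of MFs on the other). Hence the cone of $\Tot(\cP^\bu_1 \otimes \E) \to \Tot(\cP^\bu_2 \otimes \E)$ is isomorphic to $\Tot(\cQ^\bu \otimes \E)$, where $\cQ^\bu := \cone(\cP^\bu_1 \to \cP^\bu_2)$ is a bounded acyclic complex of locally free coherent sheaves. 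So it suffices to prove: for any bounded acyclic complex $\cQ^\bu$ of locally free coherent sheaves on $X$, the matrix factorization $\Tot(\cQ^\bu \otimes \E)$ is locally contractible.

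Second, I would verify this stalk by stalk. Since localization commutes with $\otimes_{\cO_X}$ and with $\Tot$, we have $\Tot(\cQ^\bu\otimes\E)_x \cong \Tot(\cQ^\bu_x \otimes_{\cO_{X,x}} \E_x)$ for each $x\in X$. Now $\cQ^\bu_x$ is a bounded acyclic complex of finitely generated projective, hence free, $\cO_{X,x}$-modules; any such complex admits a contracting homotopy, i.e.\ a degree $-1$ endomorphism $s$ with $d_{\cQ_x} s + s\, d_{\cQ_x} = \id$.

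Third, I would propagate the contraction. I claim $s\otimes \id_{\E_x}$ defines a contraction of $\Tot(\cQ^\bu_x \otimes \E_x)$ as a matrix factorization. With the standard sign convention $d_{\Tot} = d_{\cQ_x}\otimes 1 + (-1)^p \cdot 1\otimes d_\E$ on $\cQ^p_x \otimes \E_x$, one computes
$$
d_{\Tot}(s\otimes 1) + (s\otimes 1)d_{\Tot}
= (d_{\cQ_x}s + sd_{\cQ_x})\otimes 1 + \bigl((-1)^{p-1}+(-1)^p\bigr)\, s\otimes d_\E = \id,
$$
so the cross-terms cancel and only the contracting identity survives. This gives the needed null-homotopy at each stalk, and hence local contractibility of $\Tot(\cQ^\bu \otimes \E)$.

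The main obstacle is really just the bookkeeping: one must verify that the cone construction for strict morphisms of MFs is compatible, via $\Tot$ and $-\otimes \E$, with the cone of a chain map of bounded complexes, and that the resulting sign convention on $\Tot$ makes $s\otimes 1$ a genuine null-homotopy of MF morphisms (rather than just of the underlying two-periodic complex). Both are routine once the conventions are fixed, and no global cohomological input about $X$ or $\cL$ is needed — the argument is entirely local and formal.
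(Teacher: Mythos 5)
Your proof is correct and follows essentially the same strategy as the paper: localize at a stalk, use that over a local ring a quasi-isomorphism (or acyclic complex) of bounded free complexes is split, and propagate the resulting homotopy through $\Tot(-\otimes\E)$. The only organizational difference is that you reduce to the acyclic case via the cone before localizing, whereas the paper localizes immediately and invokes that a quasi-isomorphism of bounded complexes of finite free modules over a local ring is a chain homotopy equivalence, then observes that $\Tot(-\otimes\E)$ sends chain homotopies to homotopies of matrix factorizations. Your explicit sign check of the contraction $s\otimes\id$ is a concrete verification of that last observation (and your worry in the final paragraph is a non-issue: a null-homotopy of the underlying twisted two-periodic complex is, by definition, a null-homotopy of the matrix factorization).
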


\begin{proof}  We may localize at a point $x$, in which case the
  assertion becomes: 
If $W$ is an element in a local ring
  $Q$, $P^\bu_1 \to P^\bu_2$ is a
  quasi-isomorphism of bounded complexes of free $Q$-modules of finite
  rank, and
  $\bE$ is a matrix factorization of $W$ over $Q$, then 
$$
\Tot(P^\bu_1 \otimes \bE) \to
\Tot(P^\bu_2 \otimes \bE) 
$$
is an isomorphism in the category $[MF(\Spec Q, \cO, W)]$.
In this setting, $P^\bu_1 \to P^\bu_2$ is a chain homotopy equivalence
(i.e., there is an inverse up to chain homotopy). It therefore
suffices to note that the functor
$\Tot(- \otimes \bE)$ from bounded complex of free modules to matrix
factorizations sends chain homotopies of complexes to homotopies of
matrix factorizations.
\end{proof}

\begin{thm} 
\label{MainThm1}
Let $X$ be a scheme that is projective over a Noetherian ring $Q$ and
$\cL = \cO_X(1)$ the corresponding very ample line
bundle on $X$. 
For any global section $W$ of $\cL$, the canonical functor
$$
[MF(X, \cL,W)] \to
[MF(X, \cL,W)]_{\bH}
$$
is an equivalence. 
\end{thm}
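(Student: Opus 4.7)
Since the functor is the identity on objects, essential surjectivity is automatic; we focus on full faithfulness. Fix matrix factorizations $\bE$ and $\bF$; we must prove the natural map
\[ \Hom_{[MF]}(\bE, \bF) \to \bH^0(X, \HomMF(\bE, \bF)) \]
is a bijection. The plan is to replace $\bE$ by a weakly equivalent $\tilde{\bE}$ whose components are direct sums of twists $\cE_i(-n)$ with $n$ arbitrarily large, so that Serre's vanishing theorem forces the hypercohomology spectral sequence for $\HomMF(\tilde{\bE}, \bF)$ to collapse to naive $H^0$ of global sections. The conclusion then follows via Lemma \ref{Lem1} and a cofinality argument in the Verdier quotient.

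To construct $\tilde{\bE}$: since $\cO_X(1)$ is very ample, for each $n \geq 1$ there is a surjection $\cO_X(-n)^{r_n} \twoheadrightarrow \cO_X$. Iterating and using projectivity of $X$ over the Noetherian ring $Q$ (via an embedding $X \hookrightarrow \bP^m_Q$ and syzygy bookkeeping on $\bP^m_Q$), one obtains, for any chosen threshold $N$, a bounded quasi-isomorphism $\cP^\bu \to \cO_X$ in which each $\cP^i$ is a direct sum of $\cO_X(-n_i)^{r_i}$ with $n_i \geq N$. Set $\tilde{\bE} := \Tot(\cP^\bu \otimes \bE)$; by Lemma \ref{Lem3} the induced map $\tilde{\bE} \to \bE$ is a weak equivalence, and the components of $\tilde{\bE}$ are direct sums of $\cE_i(-n_j)$ with $n_j \geq N$. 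For $N$ large enough (depending on the coherent sheaves $\uHom(\cE_i, \cF_j)$), Serre vanishing gives $H^{>0}(X, \uHom(\tilde{\cE}_i, \cF_j)) = 0$ for all $i, j \in \{0, 1\}$, so the $E_1$-page of the column-filtration spectral sequence for the \v{C}ech double complex computing $\bH^\bu(X, \HomMF(\tilde{\bE}, \bF))$ is concentrated on the row $q = 0$. This degeneration gives
\[ \bH^n(X, \HomMF(\tilde{\bE}, \bF)) \cong H^n(\Gamma(X, \HomMF(\tilde{\bE}, \bF))) \]
for every $n$, and at $n = 0$ identifies $\bH^0(X, \HomMF(\tilde{\bE}, \bF))$ with $\Hom_{[MF]_\naive}(\tilde{\bE}, \bF)$.

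By Lemma \ref{Lem1}, the weak equivalence $\tilde{\bE} \to \bE$ induces isomorphisms on $\Hom$ in both $[MF]$ and $[MF]_\bH$, so it suffices to show the canonical map
\[ \Hom_{[MF]_\naive}(\tilde{\bE}, \bF) \to \Hom_{[MF]}(\tilde{\bE}, \bF) \]
is a bijection. Using the Verdier calculus of fractions
\[ \Hom_{[MF]}(\tilde{\bE}, \bF) = \colim_{\bE' \xrightarrow{\sim} \tilde{\bE}} \Hom_{[MF]_\naive}(\bE', \bF), \]
reapply the construction to each $\bE'$: the weak equivalences from matrix factorizations of the form $\Tot(\cQ^\bu \otimes \bE')$ with $\cQ^\bu$ sufficiently negative form a cofinal subsystem, and between any two such the restriction map on naive $\Hom$ is an isomorphism, since both sides identify with $\bH^0(X, \HomMF(\bE, \bF))$ through Serre vanishing and Lemma \ref{Lem1}. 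Since $\tilde{\bE}$ itself lies in this subsystem (with the identity weak equivalence), the colimit collapses to $\Hom_{[MF]_\naive}(\tilde{\bE}, \bF)$, and the canonical map is an isomorphism.

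The main obstacle is producing the bounded quasi-isomorphism $\cP^\bu \to \cO_X$ by arbitrarily negative twists of $\cO_X$. Over a general Noetherian ring $Q$ without finite global dimension, bounded locally free resolutions of $\cO_X$ on $\bP^m_Q$ need not exist; the argument may require additional hypotheses on $Q$, or one may truncate using that the resolution length need exceed only the size of the \v{C}ech cover, or replace the explicit $\tilde{\bE}$ by a more intrinsic cofibrant-replacement mechanism in the DG category of matrix factorizations. Once such a replacement is in place, the remainder of the proof is standard spectral sequence bookkeeping combined with Verdier cofinality.
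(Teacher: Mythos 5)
Your overall strategy matches the paper's: replace $\bE$ by a weakly equivalent $\bE'$ with sufficiently negatively twisted components (via Lemma \ref{Lem3}), use Serre vanishing to collapse the hypercohomology spectral sequence, and conclude via Lemma \ref{Lem1} together with the Verdier calculus of fractions. However, the ``main obstacle'' you flag at the end is not an obstacle at all, and this is where the proposal falls short of a complete argument.

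You do not need a bounded locally free \emph{resolution} of $\cO_X$; you need a bounded complex of locally free sheaves \emph{quasi-isomorphic} to $\cO_X$, and these exist unconditionally. The paper's construction: pick $j \gg 0$ and let $\cO_X(-j)^k \onto \cO_X$ be the surjection given by the degree-$j$ monomials in the ambient $\Proj Q[x_0,\dots,x_m]$. Its Koszul complex is a bounded acyclic complex augmenting to $\cO_X$; deleting the $\cO_X$ term yields a bounded complex
\[
\cP(j)^\bu \;=\; \left(0 \to \cO_X(-kj)^{\binom{k}{k}} \to \cdots \to \cO_X(-2j)^{\binom{k}{2}} \to \cO_X(-j)^k \to 0\right)
\]
with a single nonzero cohomology sheaf $\cO_X$ in top degree. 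The map $\cP(j)^\bu \to \cO_X$ is a quasi-isomorphism, the complex has length $k$ independent of $j$, and every term is a twist $\cO_X(-nj)$ with $n \geq 1$, so choosing $j$ large makes all twists as negative as desired. No hypothesis on the global dimension of $Q$ is used or needed, and no ``intrinsic cofibrant replacement'' is required. Your caveat paragraph therefore proposes workarounds for a non-problem, while leaving the actual construction unspecified.

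One further point of imprecision: you claim the $E_1$-page of the hypercohomology spectral sequence for $\HomMF(\tilde\bE, \bF)$ is concentrated in a single row, giving $\bH^n \cong H^n\Gamma$ for \emph{every} $n$. This is too strong. The twisted two-periodic complex $\cC = \HomMF(\tilde\bE, \bF)$ satisfies $\cC^q \cong \cC^0(q/2)$ (for $q$ even), so as $q \to -\infty$ the twists become arbitrarily negative and Serre vanishing fails there. What one actually arranges is $H^p(X, \cC^q)=0$ for $p>0$ and $q \geq -n-1$, where $n$ bounds the cohomological dimension of $X$ for quasi-coherent sheaves; combined with the boundedness of the nonzero columns, this suffices to identify $E_\infty$ in total degree $0$ (and in fact in all total degrees $\geq M$ for some $M$) with $H^0\Gamma$. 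The conclusion you want is correct, but the degeneration is only one-sided, and saying so matters for the convergence of the spectral sequence of an unbounded complex.

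The cofinality reformulation of the surjectivity and injectivity steps is fine and is essentially equivalent to the paper's two commutative-diagram arguments. Once the explicit truncated Koszul replacement is substituted for the vague ``syzygy bookkeeping,'' and the spectral-sequence claim is weakened to the correct one-sided vanishing, your proof is complete and agrees in substance with the paper's.
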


The proof of the theorem uses the following lemma.

\begin{lem} \label{Lem2}
Under the assumptions of Theorem \ref{MainThm1}, for any pair of matrix factorizations $\bE, \bF$, there is a weak
equivalence $\bE' \to \bE$ such that the canonical map
$$
\Hom_{[MF]_\naive}(\bE', \bF) \map{\cong}
\Hom_{[MF]_{\bH}}(\bE', \bF)
$$
is an isomorphism.
\end{lem}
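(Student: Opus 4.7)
The strategy is to find a weak equivalence $\bE' \to \bE$ in which all components of $\bE'$ are direct sums of $\cO_X(-N)$ for a single integer $N = N(\bE,\bF)$, chosen sufficiently large relative to $\bE$, $\bF$, and the Krull dimension of $X$. Once $\bE'$ has this form, Serre vanishing delivers the conclusion as follows. In any fixed total degree, the complex $\Gamma(\cU, \HomMF(\bE', \bF))$ receives contributions from only finitely many $\HomMF$-degrees — namely, if $m = |\cU|$, then total degree $n$ involves $\HomMF$-degrees in the bounded range $[n-m+1, n]$. Each such component is a direct sum of sheaves $\uHom(\cE'_i, \cF_j(k)) \cong \cF_j(k+N)^{\oplus r_i}$ where $k$ itself ranges over a bounded set. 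Once $N$ dominates the Serre regularities of the finitely many coherent sheaves $\cF_j$ involved, every sheaf appearing in the relevant bounded range of $\HomMF(\bE', \bF)$ is $\Gamma(X,-)$-acyclic. The hypercohomology spectral sequence $E_1^{p,q} = H^q(X, \HomMF(\bE', \bF)^p) \Rightarrow \bH^{p+q}$ then collapses at $E_1$ in these degrees, giving $H^0\Gamma(X, \HomMF(\bE', \bF)) \cong \bH^0(X, \HomMF(\bE', \bF))$, as desired.

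To construct $\bE'$, I exploit the fact that $\cO_X(-N)^{\oplus r}$ is projective with respect to surjections onto coherent sheaves for $N \gg 0$: by Serre vanishing, $\Ext^1_{\cO_X}(\cO_X(-N), \cG) = H^1(X, \cG(N)) = 0$ for any fixed coherent $\cG$. Since $\cE_0(N)$ and $\cE_1(N)$ are globally generated for $N$ large, there are strict epimorphisms $\cE'_i := \cO_X(-N)^{\oplus r_i} \twoheadrightarrow \cE_i$ for $i = 0, 1$. I then attempt to lift the structure maps $e_0, e_1$ of $\bE$ to candidate maps $e'_0, e'_1$ on the $\cE'_i$. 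The naive lifts agree with multiplication by $W$ modulo the kernel of $\bE' \twoheadrightarrow \bE$ but may have nontrivial error terms valued there. These errors are absorbed by an iterative correction — either enlarging $\bE'$ by auxiliary summands of the form $\cO_X(-N)^{\oplus s}$ together with correcting maps, or repeatedly resolving the discrepancy using the projectivity property — and the process terminates in finitely many steps. Finally, the resulting strict morphism $\bE' \to \bE$ is shown to be a weak equivalence by checking at each stalk: at $x \in X$, the localization is a surjection of matrix factorizations over $\cO_{X,x}$ whose kernel becomes contractible by Proposition \ref{affine_loc_acyclic}.

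The main technical obstacle lies in Step 2 above: producing $\bE'$ as a genuine matrix factorization (with $e'_0 \circ e'_1 = W\cdot\id$ exactly and not merely modulo the kernel) together with a strict morphism to $\bE$ having locally contractible cone. The projectivity property of $\cO_X(-N)^{\oplus r}$ for $N \gg 0$ is the key tool that both permits the initial lift of structure maps and makes the correction procedure converge in a finite number of iterations.
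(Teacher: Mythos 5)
Your Step 1 --- the Serre vanishing and hypercohomology spectral sequence argument once $\bE'$ has components of sufficiently negative twist --- is essentially the paper's argument and is sound, modulo being a bit more careful that the relevant range of Cech/sheaf degrees (which, by the cohomological dimension bound, is what actually matters for $\bH^0$) is covered.

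The genuine gap is Step 2, your construction of $\bE'$. You yourself flag it as the ``main technical obstacle,'' and the obstacle is real: the proposed iterative correction procedure has no termination argument. Each time you adjust $e'_0, e'_1$ to kill the discrepancy $e'_0 e'_1 - W\cdot\mathrm{id}$, you are working modulo the kernel of $\bE' \twoheadrightarrow \bE$, and there is no reason the new error is ``smaller'' in any sense that would make the process stop. Moreover, even the initial lifts $e'_i$ are not automatic: lifting $e_1 p_1: \cE'_1 \to \cE_0$ through $p_0: \cE'_0 \twoheadrightarrow \cE_0$ requires $\Ext^1_{\cO_X}(\cE'_1, \ker p_0) = 0$, i.e.\ $H^1(X, (\ker p_0)(N)) = 0$, and since $\ker p_0$ itself depends on $N$, this is not delivered by Serre vanishing for free (for instance $H^1(X,\cO_X)$ can be nonzero). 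Finally, insisting that all components of $\bE'$ be direct sums of a single $\cO_X(-N)$ is stronger than necessary and makes the construction harder.

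The paper sidesteps all of this with a categorical construction. Take the surjection $\cO_X(-j)^k \twoheadrightarrow \cO_X$ given by degree-$j$ monomials and let $\cP(j)^\bu$ be the associated truncated Koszul complex, a bounded complex of sheaves $\cO_X(-nj)^{\binom{k}{n}}$ quasi-isomorphic to $\cO_X$. Then set
$$
\bE' := \Tot\bigl(\cP(j)^\bu \otimes \bE\bigr).
$$
The $\Tot$ of a bounded complex of strict morphisms of matrix factorizations is automatically a genuine matrix factorization --- no lifting or correction is needed --- and Lemma \ref{Lem3} shows $\bE' \to \bE$ is a weak equivalence because $\cP(j)^\bu \to \cO_X$ is a quasi-isomorphism of bounded complexes of locally frees. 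The components of $\bE'$ are then direct sums of twists $\cE_0(-a)$, $\cE_1(-b)$ with $a,b \geq N$ for any prescribed $N$ (take $j \gg 0$), which suffices for the Serre vanishing argument since $\uHom(\cE_i(-a),\cF_j) \cong \uHom(\cE_i,\cF_j)(a)$. This is the key idea your proposal is missing.
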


\begin{proof} 
Say $X$ is a closed subscheme of $\bP^{m}_Q = \Proj Q[x_0, \dots, x_m]$
and $\cL = \cO_X(1)$ is the restriction of $\cO_{\bP^{m} }(1)$ to $X$.

For each positive integer $j$ we
have a surjection
$$
\cO_X(-j)^k \onto \cO_X
$$
given by the the set of monomials of degree $j$ in $m+1$ variables (so
that $k = k(j)$ is the number of such monomials). Let $\cP(j)^\bu$ be
the associated ``truncated'' Koszul complex
$$
0 \to \cO_X(-kj)^{k \choose k} \to \cO_X(-(k-1)j)^{k \choose k-1}
\to \cdots \to
\cO_X(-2j)^{k \choose 2}
\to \cO_X(-j)^k \to 0,
$$
indexed cohomologically with
$\cO_X(-nj)^{k \choose n}$ in degree $-n+1$.
Then the evident map 
$$
\cP(j)^\bu \map{\sim} \cO_X
$$
is a quasi-isomorphism, and so 
by Lemma \ref{Lem3}, the induced map
$$
\bE' := \Tot(\cP(j)^\bu \otimes \E ) \map{\sim} \Tot(\cO_X \otimes \bE) = \bE
$$
is a  weak equivalence. 
We prove that for $j \gg 0$ this weak equivalence 
has the desired property.

Write
$\cE_0, \cE_1$ and $\cE'_0, \cE'_1$ for the components of $\bE$ and
$\bE'$.
We have that
$$
\cE'_0
= \cE_0(-j)^k \oplus \cE_1(1-2j)^{k \choose 2} \oplus \cE_0(1-3j) ^{k
  \choose 3} \oplus \cE_1(2-4j) ^{k
  \choose 4}
\oplus \cdots
$$
and similarly for $\cE'_1$. In particular, for any integer
$N$ we may choose $j \gg 0$ so that 
$\cE'_0$ and $\cE'_1$ are direct
sums of locally free coherent sheaves of the form $\cE_0(-a)$ and $\cE_1(-b)$ with $a,b
\geq N$.

Let $\cC$ denote the twisted two-periodic complex $\HomMF(\bE',
\bF)$. By the above, for any integer $N$, we may pick $j \gg 0$ so that
$\cC^0$ and $\cC^1$ are direct sums of locally free coherent sheaves of the form
\begin{equation} \label{L415b}
\uHom_{\cO_X}(\cE_0,\cF_0)(a),
\uHom_{\cO_X}(\cE_1,\cF_0)(b),
\uHom_{\cO_X}(\cE_0,\cF_1)(c), \, \text{ or } \,
\uHom_{\cO_X}(\cE_1,\cF_1)(d)
\end{equation}
with $a,b,c,d \geq N$. Thinking of $\cC$ as an unbounded complex we have
$\cC^q = 
\cC^0(\frac{q}{2})$ if $q$ is even or 
$\cC^q = 
\cC^1(\frac{q-1}{2})$ if $q$ is odd. Thus, for any integers $N$ and
$M$, we may choose $j$ sufficiently large so that
each of $\cC^M, \cC^{M+1}, \cdots $ is a direct sum of 
locally free coherent sheaves as in \eqref{L415b} with $a,b,c,d \geq N$. In
particular, for any $M$, we may choose $j$ sufficiently large so that
$H^p(X, \cC^q) = 0$ for all $p >0$ and all $q \geq M$.

The result now follows from the spectral sequence
$$
E_1^{p,q} = H^p(X, \cC^q) \Longrightarrow \bH^{p+q}(X, \HomMF(\bE', \bF))
$$
using that $X$ has bounded cohomological dimension for quasi-coherent sheaves.  In more detail,
if $X$  has cohomological dimension $n$, we may 
choose $j$ sufficiently large so that 
$H^p(X, \cC^q) = 0$ for all $p > 0$ and $q \geq -n-1$. It follows that 
$$
\bH^{0}(X, \HomMF(\bE', \bF))
\cong E_2^{0,0} = H^0 \Gamma(X,\HomMF(\bE', \bF)). \qedhere
$$
\end{proof}

\begin{rem} The proof of the Lemma actually shows that for any matrix
  factorizations $\E, \F$ and any integer
  $M$, there is a weak equivalence $\E' \to \E$ so that
$$
H^q \Gamma(X, \HomMF(\E', \F)) \to
\bH^q (X, \HomMF(\E', \F)) 
$$
is an isomorphism for all $q \geq M$.
\end{rem}

\begin{proof}[Proof of Theorem \ref{MainThm1}]
We need to prove 
\begin{equation} \label{L415}
\Hom_{[MF]}(\bE, \bF) \to
\Hom_{[MF]_{\bH}}(\bE, \bF) 
\end{equation}
is an isomorphism for every
pair of matrix factorizations $\bE$,
$\bF$. Given such a pair, let $\bE' \to \bE$ be a weak equivalence as in Lemma \ref{Lem2}
so that
$$
\Hom_{[MF]_\naive}(\bE', \bF) \map{\cong}
\Hom_{[MF]_{\bH}}(\bE', \bF),
$$
and consider the commutative diagram
$$
\xymatrix{
\Hom_{[MF]_\naive}(\E, \F)
\ar[r] \ar[d] &
\Hom_{[MF]}(\E, \F)
\ar[r] \ar[d]^\cong &
\Hom_{[MF]_{\bH}}(\E, \F) \ar[d]^\cong \\
\Hom_{[MF]_\naive}(\E', \F)
\ar[r] &
\Hom_{[MF]}(\E', \F)
\ar[r] &
\Hom_{[MF]_{\bH}}(\E', \F). \\
}
$$
The middle and right-most vertical arrows are isomorphisms and the
composition of the arrows along the bottom is an isomorphism. It
follows that \eqref{L415} is onto.

Suppose $\alpha \in \Hom_{[MF]}(\E, \F)$ is in the
kernel of \eqref{L415}. We may represent $\alpha$ by a diagram
of strict morphisms
$$
\E \lmap{s} \G \map{\beta} \F
$$
with $s$ a weak equivalence. Let $\G' \to \G$ be a weak equivalence
for the pair $\G, \F$ given by Lemma \ref{Lem2}, so that
$$
\Hom_{[MF]_\naive}(\bG', \bF) \map{\cong}
\Hom_{[MF]_{\bH}}(\bG', \bF)
$$
is an isomorphism. By precomposing the above diagram for $\alpha$ with
the weak equivalence $\G' \to \G$, we may represent $\alpha$ also as a
diagram of strict morphisms of
the form 
$$
\E \lmap{s'} \G' \map{\beta'} \F
$$
with $s'$ a weak equivalence.
Since $\alpha$ is mapped to zero and $s'$ is mapped to an isomorphism,
$\beta'$ is mapped to zero in $[MF]_{\bH}$. But then since
$$
\Hom_{[MF]_\naive}(\bG', \bF) \map{\cong}
\Hom_{[MF]_{\bH}}(\bG', \bF)
$$
is an isomorphism, we have that
$\beta'$ is zero already in $[MF]_\naive$ and hence also
is zero in $[MF]$. It follows that $\alpha = 0$ in
$[MF]$.  
\end{proof}

\section{Hom-sets in the singularity category}
In this section we return to the general situation, with $X$ a Noetherian separated
scheme and $\cL$  a line bundle. But we make the further assumption
that  $W \in \Gamma(X, \cL)$ is a {\em regular}
section, i.e. $W: \cO_X \to \cL$ is injective. Equivalently, for each $x  \in X$, the element $W_x \in
\cO_{X,x} \cong \cL_x$ is a 
non-zero-divisor. 
Define $Y$ to be
the zero subscheme of $W$ (i.e., by the ideal given
as the image of the injective map $W^*: \cL^* \to \cO_X$).

The {\em singularity category} of a 
scheme $Z$ is the Verdier quotient
$$
\Dsing(Z) := D^b(Z)/\Perf(Z),
$$
where $D^b(Z)$ is the bounded derived category of coherent
sheaves and $\Perf(Z)$ is the full subcategory consisting
of perfect complexes --- i.e., those complexes that are locally
quasi-isomorphic to bounded complexes of free modules of finite rank. This construction was introduced by Buchweitz \cite{Bu87}
in the case when $Z$ is affine
and rediscovered by Orlov \cite{MR2101296}.

For a matrix factorization
$$
\bE  = \left(\cE_1 \map{e_1} \cE_0 
  \map{e_0} \cE_1(1)\right)
$$ 
define $\coker(\bE)$ to be $\coker(e_1)$. Multiplication
by $W$ on the coherent sheaf $\coker (\bE)$ is zero and hence $\coker(\bE)$ may be
regarded as a coherent sheaf on $Y$ and thus as an object of
$D_{sing}(Y)$. (More formally, the canonical map gives an isomorphism $\coker(e_1) \cong i_*i^*
\coker(e_1)$ and we
define $\coker(\bE) = i^* \coker(e_1)$.)

\begin{defn}
For a matrix factorization $\E =  (\cE_1 \xra{e_1} \cE_0 \xra{e_0}
\cE_1(1))$, $i^*\bE$ is the chain complex of locally free coherent
sheaves on $Y$
$$
\cdots \to  i^*\cE_0(-1) \map{e_0(-1)} 
i^*\cE_1 \map{e_1} i^*\cE_0 \map{e_0}
i^*\cE_1(1) \map{e_1(1)}  i^*\cE_0(1) \to \cdots.
$$
\end{defn}

We write $\uHom_{\cO_Y}(\cM,\cN)$ for the sheaf of homomorphisms
between two sheaves $\cM, \cN$ on $Y$ (or the total product complex of
the bicomplex of such if $\cM$ or $\cN$ is a complex) and
$\cExt^*_{\cO_Y}(\cM,\cN)$ for the
right derived functors of $\uHom(\cM, -)$.

\begin{lem}
\label{sheaf_ext} Assume $X$ is a Noetherian separated scheme, $\cL$
is a
line bundle on $X$, and $W$ is a regular global section of $\cL$. 
Let $\bE$ be a matrix factorization of $(X, \cL, W)$, set $\cM = \coker \bE$, and let $\cN$ be any coherent sheaf on $Y.$ The
 following hold:
\begin{enumerate}
\item \label{loc_free_res} $i^* \bE$ is an acyclic complex and the brutal truncation $(i^* \bE)_{\geq 0}$ is a resolution of $\cM$ by locally free
coherent sheaves on $Y$.

\item For all $q \geq 1$, there is an isomorphism \[\cExt^q_{\cO_Y}(\cM ,\cN)(1) \cong \cExt^{q+2}_{\cO_Y}(\cM,\cN).\]

\item If $\cN = \coker(\bF)$ for a matrix factorization $\bF$,
  then there is a quasi-isomorphism, natural in both $\bE$ and $\bF$,
$$
\HomMF({\bE}, {\bF} ) \to i_* \uHom_{\cO_Y}( i^* \bE, \cN ).
$$

\item If $X$ is projective over a ring and $\cL$ is
  the corresponding very ample line bundle, then the edge map of the
  local-to-global spectral sequence 
\[ \Ext^q_{\cO_Y}(\cM,\cN) \to \Gamma(Y, \cExt^q_{\cO_Y}(\cM ,\cN)) \]
is an isomorphism for $q \gg 0$. \label{edge_map_ss}
(Here $\cN$ can be an arbitrary coherent sheaf.)
\end{enumerate}
\end{lem}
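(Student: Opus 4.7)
The plan is to treat parts (1)--(3) by reduction to the affine local setting, where each becomes a standard fact about matrix factorizations, and then to deduce (4) from (2) via Serre vanishing. Throughout, the key geometric input is that $i^*\bE$ is a $2$-periodic locally free resolution of $\cM$, with period shift equal to tensoring with $\cL|_Y$.

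For (1), acyclicity of $i^*\bE$ is a stalkwise assertion and so reduces to the following standard fact: if $(E_1 \xra{e_1} E_0 \xra{e_0} E_1)$ is a matrix factorization of a nonzerodivisor $f$ in a local ring $Q$ with $E_i$ free, then the associated $2$-periodic complex over $Q/(f)$ is acyclic. Given $\bar x \in \ker \bar e_1$, one lifts to $\tilde x \in E_1$, writes $e_1(\tilde x) = f y$ for some $y \in E_0$, and uses $e_0 e_1 = f \cdot \id$ together with the fact that $f$ is a nonzerodivisor on $E_1$ to conclude $\tilde x = e_0(y)$; the other half is symmetric. Right exactness of $i^*$ then identifies $\cM$ with the cokernel of the truncated complex $(i^*\bE)_{\geq 0}$. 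For (2), use this resolution: $\cExt^q_{\cO_Y}(\cM, \cN)$ is the $q$-th cohomology sheaf of $\uHom_{\cO_Y}((i^*\bE)_{\geq 0}, \cN)$, and the explicit form of $i^*\bE$ shows that advancing by two cohomological positions corresponds to tensoring with $\cL|_Y$, whence the claimed periodicity.

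For (3), since the map is between complexes of sheaves on $X$ and being a quasi-isomorphism is a stalkwise property, reduce again to the affine local case (with $R = Q/(f)$, $M = \coker \bE$, $N = \coker \bF$). The natural map sends a degree-$0$ element $(g_1, g_0) \in \uHom(\cE_0, \cF_0) \oplus \uHom(\cE_1, \cF_1)$ to the composite $i^*\cE_0 \xra{i^* g_0} i^*\cF_0 \onto \cN$, and extends by the twisted $2$-periodicity to all degrees; one checks it is a chain map, and both sides compute $\Ext^q_R(M, N)$ in positive degrees via the locally free resolution $(i^*\bE)_{\geq 0}$ of $M$, so the map is a quasi-isomorphism. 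Naturality in $\bE, \bF$ is built into the construction.

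For (4), iterate (2) to obtain $\cExt^{q+2n}_{\cO_Y}(\cM, \cN) \cong \cExt^q_{\cO_Y}(\cM, \cN)(n)$ for $q \geq 1$ and $n \geq 0$. Since $\cM$ and $\cN$ are coherent on the Noetherian scheme $Y$, so is each $\cExt^q_{\cO_Y}(\cM, \cN)$; moreover the closed embedding $Y \into X \into \bP^m_Q$ displays $Y$ as projective over the base and makes $\cL|_Y$ very ample on $Y$. Serre vanishing therefore gives $H^p(Y, \cExt^q_{\cO_Y}(\cM, \cN)(n)) = 0$ for all $p > 0$ once $n \gg 0$, and feeding this into the local-to-global spectral sequence $E_2^{p,q} = H^p(Y, \cExt^q_{\cO_Y}(\cM, \cN)) \Rightarrow \Ext^{p+q}_{\cO_Y}(\cM, \cN)$ collapses it onto the row $p = 0$ in large total degree, so the edge map becomes an isomorphism. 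The main subtlety is simply bookkeeping: only the two parity classes $q_0 = 1, 2$ need to be controlled by Serre vanishing, after which the periodic isomorphism of (2) handles all sufficiently large $q$ uniformly.
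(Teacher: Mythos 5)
Your treatment of parts (1), (2), and (4) follows essentially the same route as the paper: reduce acyclicity to the stalk, exploit $i^*\bE(-1) \cong i^*\bE[-2]$ for the periodicity of $\cExt$, and feed that periodicity together with Serre vanishing into the local-to-global spectral sequence. (The paper cites Eisenbud for the stalk acyclicity where you give the short direct argument, which is fine.)

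Part (3) has a genuine gap. You construct the correct map (postcomposition with $\cF_0 \onto i_*\cN$) and correctly observe that being a quasi-isomorphism is a stalkwise condition, but you then assert that ``both sides compute $\Ext^q_R(M,N)$ in positive degrees via the locally free resolution $(i^*\bE)_{\geq 0}$ of $M$.'' This has two problems. First, the left-hand side $\HomMF(\bE,\bF)$ does not involve the resolution $(i^*\bE)_{\geq 0}$ of $\cM$ over $Y$ at all --- it is built from $\uHom_{\cO_X}(\cE_i, \cF_j)$ on $X$ --- so the statement that its cohomology ``is $\Ext^q_R(M,N)$'' is not a computation via that resolution but is essentially the conclusion of the lemma restated, and in the generality claimed (no regularity of $X$, $\cM$ not assumed MCM) it is not a citable classical fact. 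Second, even if one knew that both complexes had isomorphic cohomology, that alone does not show that the \emph{given} natural map induces an isomorphism on cohomology; you need an argument tied to the map itself. The paper handles this by observing that the natural map is a degreewise surjection (since each $\uHom_{\cO_X}(\cE_i,-)$ is exact and $0 \to \cF_1 \to \cF_0 \to i_*\cN \to 0$ is exact), writing down the kernel complex explicitly, and exhibiting a contracting homotopy for it; this gives the quasi-isomorphism directly, without any regularity hypothesis and without invoking a prior identification of either side's cohomology. You should either reproduce that kernel-and-contraction computation or replace your assertion with an argument that controls the map, not just the two cohomologies separately.
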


\begin{proof}
It is clear that $i^*\bE$ is a complex, since the composition of two
adjacent maps is multiplication by the image of $W$ in $\cO_Y(1)$,
which is zero. Acyclicity may be checked locally, and for any point
$x \in X$, the complex $(i^* \bE)_x$ is a two-periodic complex
induced by a matrix factorization over $\cO_{X,x}$ of a non-zero divisor, and hence is
acyclic by \cite[5.1]{Ei80}.
The rest of part (1) now follows directly since $\cM$ was defined to be
$i^* \coker(e_1)$.

For part (2), there is an isomorphism $i^*\bE(-1) \cong i^*
\bE[-2]$, and hence, using part (1), we have
$$
\begin{aligned}
\cExt^q_{\cO_Y}(\cM & ,\cN) (1)  \cong \cH^q( \uHom_{\cO_Y}(i^*
  \bE, {\cN}) )(1) \cong \cH^q( \uHom_{\cO_Y}(i^* \bE(-1), {\cN}) ) \\
& \cong \cH^q( \uHom_{\cO_Y}(i^* \bE[-2], {\cN}) ) 
  \cong \cH^{q+2}( \uHom_{\cO_Y}(i^* \bE, {\cN}) )
\cong \cExt^{q+2}_{\cO_Y}(\cM ,\cN),
\end{aligned}
$$
for $q \geq 1$.

For part (3), 
there is a surjective map of chain complexes,
\begin{equation} \label{E813c}
\HomMF(\bE, \bF) \onto \uHom_{\cO_X}(\bE, i_* \cN),
\end{equation}
which is natural in both variables,
given by 
the diagram
$$
{\small
\xymatrix{
\cdots \ar[r]
& 
{\begin{matrix}
  \uHom(\cE_0, \cF_1) \\
  \oplus \\
  \uHom(\cE_1, \cF_0)(-1)
\end{matrix} }
\ar[r]^{\partial^{-1}} \ar@{->>}[d]^{(0,p_*)} &
{\begin{matrix}
  \uHom(\cE_0, \cF_0) \\
  \oplus \\
  \uHom(\cE_1, \cF_1)
\end{matrix}}
\ar[r]^{\partial^{0}} \ar@{->>}[d]^{(p_*,0)} &
{\begin{matrix}
  \uHom(\cE_0, \cF_1)(1) \\
  \oplus \\
  \uHom(\cE_1, \cF_0)
\end{matrix}}
\ar[r]^{\phantom{XXXX} h^{-1}(1)} \ar@{->>}[d]^{(0,p_*)} &
\cdots \\
\cdots \ar[r] &  \uHom(\cE_1, i_* \cN)(-1) \ar[r]^{-e_0^*} &
\uHom(\cE_0, i_* \cN) \ar[r]^{e_1^*} & 
\uHom(\cE_1, i_* \cN) \ar[r]^{\phantom{XXXX}-e_0^*(1)} &
\cdots, \\
}}
$$
where $p: \cF_0 \onto i_* \cN$ is the canonical map.
(Recall that
$$
\partial^{-1} = 
\begin{bmatrix}
(f_1)_* & -e_0^* \\
-e_1^* & (f_0)_* \\
\end{bmatrix}
\quad \text{ and } \quad
\partial^{0} = 
\begin{bmatrix}
(f_0)_* & e_0^* \\
e_1^* & (f_1)_* \\
\end{bmatrix}.)
$$
Since 
$\uHom_{\cO_X}(\bE, i_* \cN)$ is canonically isomorphic to 
$i_* \uHom_{\cO_Y}(i^*\bE, \cN)$, it suffices to prove \eqref{E813c} is
a quasi-isomorphism. 
Since $\uHom_{\cO_X}(\cE_0, -)$ and $\uHom_{\cO_X}(\cE_1,-)$ are
exact functors and $0 \to \cF_1 \to \cF_0 \to i_* \cN \to 0$ is an exact
sequence, the kernel of \eqref{E813c} is
$$
\cdots \to
\begin{matrix}
\uHom(\cE_0, \cF_1) \\ \oplus \\
\uHom(\cE_1, \cF_1) (-1) 
\end{matrix}
\xra{
{\tiny
\begin{bmatrix}
1 & -e_0^* \\
-e_1^* & 0 \\
\end{bmatrix}
}}
\begin{matrix}
\uHom(\cE_0, \cF_1) \\  \oplus \\
\uHom(\cE_1, \cF_1) 
\end{matrix}
\xra{{\tiny
\begin{bmatrix} 
0 & e_0^* \\
e_1^* & 1 \\
\end{bmatrix}
}}
\begin{matrix}
\uHom(\cE_0, \cF_1)(-1) \\ \oplus \\
\uHom(\cE_1, \cF_1)  
\end{matrix}
\to \cdots.
$$
The maps
$$
\cdots 
\leftarrow
\begin{matrix}
\uHom(\cE_0, \cF_1) \\ \oplus \\
\uHom(\cE_1, \cF_1) (-1) 
\end{matrix}
\xla{
{\tiny
\begin{bmatrix}
1 & 0 \\
0 & 0 \\
\end{bmatrix}
}}
\begin{matrix}
\uHom(\cE_0, \cF_1) \\  \oplus \\
\uHom(\cE_1, \cF_1) 
\end{matrix}
\xla{{\tiny
\begin{bmatrix} 
0 & 0 \\
0 & 1 \\
\end{bmatrix}
}}
\begin{matrix}
\uHom(\cE_0, \cF_1)(-1) \\ \oplus \\
\uHom(\cE_1, \cF_1)  
\end{matrix}
\leftarrow
\cdots
$$
determine a contracting homotopy for this kernel, proving that
\eqref{E813b} is a quasi-isomorphism.

For part (4), consider the local-to-global spectral sequence
\[ E_2^{p,q} = H^p( Y, \cExt^q_{\cO_Y}(\cM ,\cN) ) \Longrightarrow
\Ext^{p+q}_{\cO_Y}(\cM,\cN). \]
Since $\cExt^{2q+2}_{\cO_Y}(\cM ,\cN)
\cong \cExt^2_{\cO_Y}(\cM ,\cN)(q)$ for all $q \geq 0$ and similarly
for odd indices, the spectral sequence degenerates for $q \gg 0$ by
Serre's Vanishing Theorem.
\end{proof}

Recall from \ref{twisted_periodic_def} that a twisted periodic complex of sheaves is a complex of
locally free coherent sheaves $\cC$ together with a specified isomorphism $\alpha: \cC[2] \cong \cC(1)$.
\begin{lem}
\label{hom_global_sections_commute_high_enough}
Let $Y$ be any scheme that is projective over a Noetherian ring and
$\cL = \cO_Y(1)$ the corresponding very ample line bundle (we have in mind the case when $Y \into
X$ is the zero subscheme of $W$).
Let $\cC$ be a twisted
periodic complex on $Y$. For $i \gg 0$ there are isomorphisms
\[ H^i \Gamma( Y, \cC ) \cong \Gamma(Y, \cH^i(\cC) ), \]
where $\Gamma(Y,\cC)$ is $\Gamma(Y, -)$ applied degree-wise to $\cC$, and
$\cH^i$ is the $i$th cohomology sheaf of $\cC$.
\end{lem}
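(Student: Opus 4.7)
The plan is to deduce the isomorphism from two convergent hypercohomology spectral sequences applied to a one-sided brutal truncation of $\cC$. The essential input is that the twisted periodicity $\cC[2] \cong \cC(1)$, combined with Serre's Vanishing Theorem, forces almost all higher cohomology to vanish.

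First, I would use the periodicity to produce an integer $M$ such that
$$
H^p(Y, \cC^q) = 0 = H^p(Y, \cH^q(\cC)) \quad \text{for all } p > 0 \text{ and all } q \geq M.
$$
This is possible because the periodicity gives $\cC^{2k+\epsilon} \cong \cC^\epsilon(k)$ and $\cH^{2k+\epsilon}(\cC) \cong \cH^\epsilon(\cC)(k)$ for $\epsilon \in \{0,1\}$ (twisting by a line bundle is exact), reducing to Serre vanishing for the four coherent sheaves $\cC^0, \cC^1, \cH^0(\cC), \cH^1(\cC)$ twisted by large powers of the very ample $\cO_Y(1)$. Let $d$ denote the cohomological dimension of $Y$ for quasi-coherent sheaves; this is finite because $Y$ embeds in some $\bP^m_Q$ and thus admits a finite affine open cover.

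Next, I would replace $\cC$ by the bounded-below truncation $\cC' = \sigma^{\geq M}\cC$. For any $i > M$ brutal truncation below degree $M$ affects neither $H^i\Gamma(Y,-)$ nor the sheaf $\cH^i$, so it is enough to establish the desired isomorphism for $\cC'$. Since $\cC'$ is bounded below and $Y$ has finite cohomological dimension, both standard hypercohomology spectral sequences converge. The column-filtration spectral sequence $E_1^{p,q} = H^q(Y, (\cC')^p) \Rightarrow \bH^{p+q}(Y, \cC')$ has its rows $q > 0$ identically zero by the choice of $M$, hence degenerates and yields $\bH^n(Y, \cC') \cong H^n\Gamma(Y, \cC')$ for every $n$.

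The remaining task, and the only real subtlety, is to show that the row-filtration spectral sequence $E_2^{p,q} = H^p(Y, \cH^q(\cC')) \Rightarrow \bH^{p+q}(Y, \cC')$ degenerates at the spot $(0, i)$ for $i \gg 0$. For $q > M$ one has $\cH^q(\cC') = \cH^q(\cC)$, so $E_2^{p,q} = 0$ whenever $p > 0$ by the choice of $M$; but at $q = M$ the sheaf $\cH^M(\cC') = \ker(d^M)$ is only known to be coherent, and I cannot control its higher cohomology directly. The point is that finite cohomological dimension saves the day once $i$ is taken large enough. Assume $i > M + d$. Then every $E_2^{p,q}$ with $p+q = i$ and $p > 0$ vanishes (if $q > M$ by Serre vanishing, if $q = M$ then $p = i - M > d$), and every potential target $E_r^{r, i-r+1}$ of a differential leaving $(0,i)$ (with $r \geq 2$) likewise vanishes: if $i - r + 1 > M$ by Serre vanishing applied to $\cH^{i-r+1}(\cC)$, if $i - r + 1 = M$ then $r = i - M + 1 > d$, and if $i - r + 1 < M$ then the cohomology sheaf itself is zero. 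Therefore $\bH^i(Y, \cC') \cong E_2^{0,i} = \Gamma(Y, \cH^i(\cC'))$, and combining with the first spectral sequence gives
$$
H^i\Gamma(Y, \cC) = H^i\Gamma(Y, \cC') \cong \bH^i(Y, \cC') \cong \Gamma(Y, \cH^i(\cC)) \qquad \text{for all } i > M + d,
$$
which is the asserted isomorphism.
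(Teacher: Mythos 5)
Your proof is correct, but it follows a genuinely different route from the paper's. You truncate $\cC$ to a bounded-below complex $\cC' = \sigma^{\geq M}\cC$ and run both hypercohomology spectral sequences: the first (stupid-filtration) one collapses because all higher sheaf cohomology of the terms $\cC^q$ dies for $q \geq M$, giving $\bH^n(Y,\cC') \cong H^n\Gamma(Y,\cC')$; the second (canonical-filtration) one collapses at $(0,i)$ for $i > M + d$ by a careful bookkeeping combining Serre vanishing on $\cH^q(\cC)$ for $q > M$, finite cohomological dimension $d$ for the one exceptional sheaf $\cH^M(\cC') = \ker d^M$, and vanishing below degree $M$. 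The paper instead argues directly with the cycle and boundary sheaves $\cZ^i = \ker \partial^i$ and $\cB^i = \operatorname{im} \partial^i$: periodicity gives $\cZ^i \cong \cZ^{i-2}(1)$ and $\cB^i \cong \cB^{i-2}(1)$, so Serre vanishing kills their $H^1$ for $i \gg 0$, which makes the two short exact sequences $0 \to \cZ^i \to \cC^i \to \cB^i \to 0$ and $0 \to \cB^i \to \cZ^{i+1} \to \cH^{i+1} \to 0$ stay exact after $\Gamma(Y,-)$, and the isomorphism then falls out by a direct quotient computation. The paper's argument is shorter and avoids spectral sequences; yours is more systematic and makes explicit where finite cohomological dimension is needed to handle the boundary sheaf $\cH^M(\cC')$ that the truncation introduces, which the paper's direct approach sidesteps entirely by never truncating. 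Both are sound.
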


\begin{proof}
Let $\cB^i$ and  $\cZ^i$ be the image and 
 kernel sheaves of $\partial^i_{\cC}$, respectively. 

Since $\cZ^i \cong \cZ^{i-2}(1)$, $\cB^i \cong \cB^{i-2}(1)$ and $\cL$ is
very ample, the
higher sheaf cohomology groups of $\cZ^i, \cB^i$ vanish for $i \gg
0$ by Serre Vanishing. It follows that, for $i \gg 0$, the exact sequences
\[ 0 \to \cZ^i \to \cC^i \xra{\partial^i_\cC} \cB^i \to 0 \, \text{ and
} \,
0 \to \cB^i \to \cZ^{i+1} \to \cH^{i+1} \to 0 \]
remain exact upon applying $\Gamma(-, Y)$ and thus
\[ \Gamma(Y, \cH^{i+1}) \cong \frac{\Gamma( Y, \cZ^{i+1}
  )}{\Gamma(Y, \cB^i ) } \cong \frac{ \ker \Gamma( Y, \partial^{i+1}_\cC
  )}{\operatorname{Im} \Gamma(Y,\partial^i_\cC)} = H^{i+1} \Gamma( Y, \cC
).\qedhere\]
\end{proof}

\begin{lem} \label{lem813}
Assume $X$ is a Noetherian separated scheme with enough locally frees
(i.e., every coherent sheaf on $X$ is the quotient of a
locally free coherent sheaf),
$\cL$ is a line bundle, and $W$ is a regular global section of $\cL$.
Let $\bE$ be a matrix factorization, let $Y \into
X$ be the zero subscheme of $W$, and let $\cN$ be any
coherent sheaf on $Y$. The map induced by the canonical functor $D^b(Y)
\to \Dsing(Y)$ 
\begin{equation} \label{E813b}
\Hom_{D^b(Y)}(\coker \bE, \cN[m]) \to
\Hom_{\Dsing (Y)}(\coker \bE, \cN[m])
\end{equation}
is an isomorphism for $m \gg 0$. 
\end{lem}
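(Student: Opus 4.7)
Set $\cM := \coker \bE$. Since $\Hom_{D^b(Y)}(\cM, \cN[m]) = \Ext^m_{\cO_Y}(\cM, \cN)$, the question is whether the canonical comparison map $\Ext^m_{\cO_Y}(\cM, \cN) \to \Hom_{\Dsing(Y)}(\cM, \cN[m])$ is an isomorphism for $m \gg 0$. My plan is to describe both sides as quotients of $\Hom_{\cO_Y}(\Omega_m, \cN)$, where $\Omega_m$ denotes the $m$-th syzygy of $\cM$ in the locally free resolution coming from the matrix factorization, and then identify the two quotients for $m$ large.

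By Lemma \ref{sheaf_ext}(1), $(i^*\bE)_{\geq 0}$ is a locally free resolution of $\cM$ on $Y$. For $m \geq 1$ I set $\Omega_m := \ker\bigl((i^*\bE)_{m-1} \to (i^*\bE)_{m-2}\bigr)$, with the convention $(i^*\bE)_{-1} := \cM$, so that $\Omega_0 = \cM$ and there are short exact sequences
\[
0 \to \Omega_{k+1} \to (i^*\bE)_k \to \Omega_k \to 0
\]
in which the middle term is locally free, hence perfect, hence zero in $\Dsing(Y)$. The associated triangles give $\Omega_{k+1} \cong \Omega_k[-1]$ in $\Dsing(Y)$; iterating yields $\Omega_m \cong \cM[-m]$ and therefore
\[
\Hom_{\Dsing(Y)}(\cM, \cN[m]) \cong \Hom_{\Dsing(Y)}(\Omega_m, \cN).
\]
On the Ext side, computation via the resolution gives $\Ext^m_{\cO_Y}(\cM, \cN) = \Hom_{\cO_Y}(\Omega_m, \cN)/K_m$, where $K_m$ is the image of the restriction map along $\Omega_m \hookrightarrow (i^*\bE)_{m-1}$. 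Every morphism in $K_m$ factors through the perfect sheaf $(i^*\bE)_{m-1}$ and so becomes zero in $\Dsing(Y)$, so the comparison map factors canonically as
\[
\Ext^m_{\cO_Y}(\cM, \cN) \longrightarrow \Hom_{\Dsing(Y)}(\Omega_m, \cN),
\]
and it is this factored map I aim to show is an isomorphism for $m \gg 0$.

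I expect this last step to be the main obstacle. For surjectivity, given a roof $\Omega_m \xla{s} \cC \xra{g} \cN$ with $\cone(s) \in \Perf(Y)$ representing a class in $\Hom_{\Dsing(Y)}(\Omega_m, \cN)$, the obstruction to lifting $g$ to an honest $\cO_Y$-morphism $\Omega_m \to \cN$ lives in $\Hom_{D^b(Y)}(\cone(s), \cN[1])$; I would kill this obstruction by refining the roof so that $\cone(s)$ is dominated by a high-degree truncation of the resolution, exploiting the twisted $2$-periodicity $\cExt^{q+2}(\cM,\cN) \cong \cExt^q(\cM,\cN)(1)$ of Lemma \ref{sheaf_ext}(2) together with Serre vanishing (available in the applications of interest, where $\cL$ is the very ample bundle on a projective $X$). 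For injectivity, a morphism $\Omega_m \to \cN$ that vanishes in $\Dsing(Y)$ factors through some perfect complex $P$; the plan is to show that for $m$ large the factorization can be arranged to pass through the inclusion $\Omega_m \hookrightarrow (i^*\bE)_{m-1}$, which reduces again to the vanishing of certain $\Ext^{\geq 1}$-groups controlled by the same periodicity-plus-Serre-vanishing mechanism. The technical heart of the proof is packaging these vanishings into a single uniform bound on $m$ via the local-to-global spectral sequence for $\Ext^{\ast}(\cM, \cN)$, using that the twists in $\Omega_m$ grow with $m$ and so eventually wipe out all positive sheaf cohomology against $\cN$.
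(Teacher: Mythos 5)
Your setup is correct and mirrors the structure of the paper's argument (which adapts Orlov's proof of \cite[1.21]{MR2101296}): you identify $\Ext^m_{\cO_Y}(\cM,\cN)$ with $\Hom_{\cO_Y}(\Omega_m,\cN)/K_m$, observe $\Omega_m \cong \cM[-m]$ in $\Dsing(Y)$, and correctly factor the comparison map. The paper does the same calculation with cosyzygies $\cF_k = \coker(Q^k\to Q^{k+1})$ in place of your syzygies $\Omega_m$, which is an inessential change of bookkeeping. However, the part you flag as ``the main obstacle'' is indeed the whole content of the lemma, and the strategy you outline for it has two real gaps.

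First, you propose to kill the obstruction by ``refining the roof so that $\cone(s)$ is dominated by a high-degree truncation of the resolution,'' but you never say how to achieve this domination, and it is precisely here that the key structural fact must be used: cokernels of matrix factorizations satisfy $\cExt^j_{\cO_Y}(\coker\bF,\cO_Y)=0$ for all $j>0$ (a local statement, equivalent to saying the transpose of a matrix factorization is again one). It is this vanishing, propagated to $\cExt^j_{\cO_Y}(\coker\bF,\cP)=0$ for any locally free $\cP$ and then to $\Hom_{D^b(Y)}(\coker(\bF)[-k-1],\cP^\bullet)=0$ for $k\gg 0$ and $\cP^\bullet$ perfect, that lets one replace an arbitrary perfect $\cone(s)$ by a fixed brutal truncation of the resolution. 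Your proposal never invokes this fact; without it the ``refine the roof'' step has no mechanism to succeed, since $\cone(s)$ can a priori be any perfect complex with no bound on its amplitude or twists.

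Second, your proposal substitutes Serre vanishing (together with the periodicity of $\cExt$) for the missing mechanism and acknowledges this restricts to the projective case. But Lemma~\ref{lem813} is stated, and proved in the paper, under the weaker hypothesis that $X$ is Noetherian, separated, with enough locally frees; no ampleness of $\cL$ is assumed, and the paper's proof uses only bounded cohomological dimension, not Serre vanishing. Serre vanishing and the periodicity $\cExt^{q+2}(\cM,\cN)\cong\cExt^q(\cM,\cN)(1)$ are used by the paper in \emph{different} lemmas (namely \ref{sheaf_ext}(4) and \ref{hom_global_sections_commute_high_enough}), which are then combined with \ref{lem813} in \ref{hom_sing_isom}. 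Your plan conflates these ingredients and, as written, would prove a strictly weaker statement than the one claimed, while still leaving the central refinement step unjustified.
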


\begin{proof}
We adapt the proof of
\cite[1.21]{MR2101296}. (We cannot apply this result directly
since $Y$ need not be Gorenstein.)

We first show \eqref{E813b} is onto for $m \gg 0$.
An element of
$\Hom_{\Dsing (Y)}(\coker \bE, \cN[m])$ is represented by a diagram in $D^b(Y)$ 
\begin{equation} \label{E813}
\coker(\bE) \xla{s} A \xra{f} \cN[m]
\end{equation}
such that $\cone(s)$ is perfect. The first step is to show that we can
replace $A$ by a coherent sheaf that is the cokernel of a matrix
factorization.

By Lemma \ref{sheaf_ext}(1) the
sheaf $\coker \bE$ admits a right resolution by locally free coherent
sheaves on $Y$:
\[ 0 \to \coker \bE \to Q^1 \to Q^2 \to \ldots\]
where $Q^1 = i^* \cE_1(1), Q^2 = i^* \cE_0(1), \ldots$. For any $k \geq 1$ we thus
have a distinguished triangle in $D^b(Y)$
$$
\cF_k[-k-1]  \to \coker(\bE) \to Q^{\leq k} \to \cF_k[-k]
$$
where $\cF_k = \coker(Q^k \to Q^{k+1})$. We claim there is an integer $k_0$ such that for $k \geq k_0$,
the composition $\cF_k[-k-1] \to \coker(\bE) \to \cone(s)$ is
zero. Since $\cF_k$ is the cokernel of a matrix factorization, namely
$\cF_k = \coker(\bE[k])$, 
this claim follows from the more
general claim:
given a perfect
complex $\cP^\bu = \ldots \cP^n \to \cP^{n+1} \to \ldots$ there is an integer $k_0$ such that
$\Hom_{D^b}(\coker(\bF)[-k-1], \cP^\bu) = 0$ for all 
matrix factorizations $\bF$ and all integers
$k \geq k_0$. To prove this, observe fist that, since $X$ has enough locally frees, we may
assume $\cP^\bu$ is a bounded complex of locally free coherent sheaves.
Now observe 
that $\cExt^j(\coker(\bF), \cO_Y) = 0$ for all $j > 0$;  this is a local
assertion and locally it is the statement that the transpose of a
matrix factorization is also a matrix factorization.
It follows that
$$
\cExt_Y^j(\coker \bF, \cP^n) 
\cong
\cExt_Y^j(\coker \bF, \cO_Y) \otimes_{\cO_Y} \cP^n
= 0
$$ 
for all $j > 0$ and all
$n$, and then a standard argument gives the claim.
(See the proof of
\cite[1.18]{MR2101296}.)

Consider the triangle $A \xra{s} \coker(\bE) \to \cone s \to A[1]$.  
Since for $k \geq k_0$ the composition $\cF_k[-k-1] \to \coker(\bE) \to
\cone(s)$ is zero, there exists a map $g:\cF_k[-k-1] \to A$ that
makes the left triangle commute:
$$
\xymatrix{
& \cF_k[-k-1] \ar[dr]^0 \ar[d] \ar@{-->}[dl]_{g}  &  &
\\
A \ar[r]^(.45)s & \coker(\bE) \ar[r] & \cone(s) \ar[r] & A[1] \\
}
$$
Since $\cone(\cF_k[-k-1] \to \coker(\bE)) = Q^{\leq k}$ is perfect,
composing with $g$ shows that the element of
$\Hom_{\Dsing (Y)}(\coker \bE, \cN[m])$ represented by \eqref{E813} is 
also represented by a diagram of the form
$$
\coker(\bE) \leftarrow \cF_k[-k-1] \xra{f \circ g} \cN[m]
$$ 
for $k \geq k_0$.

Now consider the triangle $Q^{\leq k}[-1] \to \cF_k[-k-1]  \to
\coker(\bE) \to Q^{\leq k} $.
For any $k$,  there is an integer $m_0 = m_0(k)$ such that 
\[\Hom_{D^b(Y)}(Q^{\leq k}[-1], \cN[m]) = 0\] for all $m \geq m_0$ --- this holds because
$\Ext^i(Q^t, \cN) \cong H^i(Y, (Q^t)^* \otimes \cN) = 0$ for all $t$
and all $i \gg 0$. It follows that the composition of $Q^{\leq k}[-1]
\to \cF_k[-k-1] \xra{f \circ g} \cN[m]$ is zero for $m \geq m_0$ and hence
the map $\cF_k[-k-1] \to \cN[m]$
factors through $\cF_k[-k-1] \to \coker(\bE)$. The element represented by
\eqref{E813} is thus actually represented by a map in $D^b(Y)$. We
have proven that
\eqref{E813b} is onto for 
$k \geq k_0$ and $m \geq m_0(k)$.

Suppose now $f: \coker(\bE) \to \cN[m]$ is a morphism in $D^b(Y)$ that
determines the zero map in $\Dsing(Y)$. Then there is a map $s: A \to
\coker(\bE)$ such that $\cone(s)$ is perfect and such that $A \to
\coker(\bE) \xra{f} \cN[m]$ is the zero map --- i.e., the image of $f$ in
$\Dsing(Y)$ is represented by a diagram of the form $\coker( \bE )
\xla{s} A \xra{0} \cN[m]$.
The
argument above shows that there is an integer $k_0$ such that 
we may take $A = \cF_k[-k-1]$ and $\cone(s) =
Q^{\leq k}$ for $k \geq k_0$. In other words, for $k \geq k_0$, the map $f$ factors as
$\coker(\bE) \to Q^{\leq k} \to  \cN[m]$ in $D^b(Y)$. But, as shown above, for $m
\geq m_0(k)$, we have $\Hom_{D^b(Y)}(Q^{\leq k},\cN[m]) = 0$. This
proves that \eqref{E813b} is one-to-one for $m \gg 0$.
\end{proof}

\begin{rem}
 Any scheme $X$ that is projective over a ring $Q$ will have
 enough locally free sheaves. Indeed, we may assume that $X =
 \bP_Q^m = \Proj Q[T_1, \ldots, T_m]$ for some ring $Q$ and $m \geq 0$.  Then any coherent sheaf
 $\cF$ is isomorphic to $\widetilde{M}$ for some finitely generated graded $Q[T_1, \ldots, T_m]$-module
 $M$. There exists a surjection $E \onto M$ with $E$ a finitely
 generated graded free
 $Q[T_1, \ldots, T_c]$-module, and the associated map $\widetilde{E} \onto
 \widetilde{M}$ gives the required surjection from a
 locally free coherent sheaf on $X$ onto $\cF$.
\end{rem}

For the Lemma below, it may help to keep in mind the case when $\cN$ is the
cokernel of a matrix factorization $\bF$. In this case, by
Lemma \ref{sheaf_ext}(3) we have that
\[H^m \Gamma( Y, \uHom_{\cO_Y}( i^* \bE, \cN ) ) \cong
H^m \Gamma(X, \uHom_{MF}(\bE,\bF)), \] and the right hand side is the set of strict morphisms between
$\bE[-m]$ and $\bF$.

\begin{lem}
\label{hom_sing_isom}
Let $X$ be a scheme that is projective over a Noetherian ring,
$\cL = \cO_X(1)$ the corresponding very ample line bundle, and $W$ a
regular global section of $\cL$. 
Let $\bE$ be a matrix factorization, let $Y \into
X$ be the zero subscheme of $W$, and let $\cN$ be any
coherent sheaf on $Y$. For every $m \in \Z$, there is a map, natural
with respect to $\bE$ and $\cN$,
\[ H^m \Gamma( Y, \uHom_{\cO_Y}( i^* \bE, \cN ) ) \to \Hom_{\Dsing(Y)}((\coker \bE)[-m],
\cN)\]
that is an isomorphism for $m \gg 0$. 
\end{lem}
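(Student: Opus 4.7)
Set $\cM := \coker(\bE)$. The plan is to build the map as a composite of natural maps, each becoming an isomorphism for $m \gg 0$ via one of the three preceding lemmas or by direct computation. The starting observation, by Lemma \ref{sheaf_ext}(1), is that the brutal truncation $P^\bullet := (i^*\bE)_{\geq 0}$ is a locally free resolution of $\cM$ on $Y$, and the complexes $\uHom_{\cO_Y}(i^*\bE, \cN)$ and $\uHom_{\cO_Y}(P^\bullet, \cN)$ will agree in all sufficiently positive cohomological degrees (the terms of $i^*\bE$ removed by the truncation only contribute to $\uHom$ in sufficiently negative degrees). This will yield
\[ \cH^m \uHom_{\cO_Y}(i^*\bE, \cN) \cong \cH^m \uHom_{\cO_Y}(P^\bullet, \cN) = \cExt^m_{\cO_Y}(\cM, \cN) \qquad \text{for } m \gg 0. \]

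To define the natural map for an arbitrary $m$, I would take a cocycle $\phi \in Z^m\Gamma(Y, \uHom_{\cO_Y}(i^*\bE, \cN))$, pass to its class in $\Gamma(Y, \cH^m \uHom_{\cO_Y}(i^*\bE, \cN))$, transport along the identification above to $\Gamma(Y, \cExt^m_{\cO_Y}(\cM, \cN))$ (taking the map to be zero for the finitely many small $m$ where the identification fails), lift via the edge map of the local-to-global spectral sequence to $\Ext^m_{\cO_Y}(\cM, \cN) = \Hom_{D^b(Y)}(\cM[-m], \cN)$, and then push through the canonical functor to $\Hom_{\Dsing(Y)}(\cM[-m], \cN)$. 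Coboundaries are sent to coboundaries at each stage, and every step is manifestly functorial in $\bE$ and $\cN$.

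To verify the map is an isomorphism for $m \gg 0$, I would exhibit it as the composite
\begin{align*}
H^m \Gamma(Y, \uHom_{\cO_Y}(i^*\bE, \cN)) &\xra{(a)} \Gamma(Y, \cH^m \uHom_{\cO_Y}(i^*\bE, \cN)) \\
&= \Gamma(Y, \cExt^m_{\cO_Y}(\cM, \cN)) \\
&\xleftarrow{(b)} \Ext^m_{\cO_Y}(\cM, \cN) \\
&= \Hom_{D^b(Y)}(\cM[-m], \cN) \\
&\xra{(c)} \Hom_{\Dsing(Y)}(\cM[-m], \cN).
\end{align*}
Step (a), the edge map relating $H^m \Gamma$ to $\Gamma(Y, \cH^m)$, is an isomorphism for $m \gg 0$ by Lemma \ref{hom_global_sections_commute_high_enough}; the equality to $\Gamma(Y, \cExt^m)$ holds for $m \gg 0$ by the paragraph above; step (b), the edge map of the local-to-global spectral sequence, is an isomorphism for $m \gg 0$ by Lemma \ref{sheaf_ext}(4); step (c) is an isomorphism for $m \gg 0$ by Lemma \ref{lem813}. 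Taking $m$ larger than the threshold of each of these three lemmas then gives the claim.

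The hard part, if any, will be the bookkeeping: verifying that the composite (a)--(c) together with the cohomology-sheaf identification is literally the map defined above. This amounts to the standard compatibilities of edge maps of spectral sequences, together with the naturality of the localization $D^b(Y) \to \Dsing(Y)$; I do not expect any difficulty beyond pinning down the indexing convention for the bi-infinite complex $i^*\bE$ and the matching brutal truncation $(i^*\bE)_{\geq 0}$ used in Lemma \ref{sheaf_ext}(1).
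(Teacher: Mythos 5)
Your verification that the map is an isomorphism for $m \gg 0$ is exactly the paper's chain of isomorphisms (Lemma~\ref{hom_global_sections_commute_high_enough}, then Lemma~\ref{sheaf_ext}(1) and (4), then Lemma~\ref{lem813}), so that part is fine. The genuine gap is in the \emph{definition} of the map for an arbitrary $m$. Your composite requires inverting the edge map $\Ext^m_{\cO_Y}(\cM,\cN) \to \Gamma(Y,\cExt^m_{\cO_Y}(\cM,\cN))$ and using the identification $\cH^m\uHom_{\cO_Y}(i^*\bE,\cN) \cong \cExt^m_{\cO_Y}(\cM,\cN)$, both of which fail for small $m$; declaring the map to be ``zero for the finitely many small $m$ where the identification fails'' is not tenable, because the threshold depends on $\bE$ and $\cN$. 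In particular naturality in $\bE$ breaks: a strict morphism $\bE \to \bE'$ with different vanishing thresholds would have to intertwine an isomorphism (for $\bE$) with the zero map (for $\bE'$) in the intermediate range of $m$. This naturality for \emph{all} $m$ is not cosmetic --- Theorem~\ref{hom_sing_cat} uses exactly this naturality in a Koszul devissage to extend the isomorphism to all $m$, so a map defined only up to a threshold would not serve.

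The paper sidesteps the issue by defining the map directly, with no appeal to thresholds. A degree-$m$ cocycle $\phi$ in $\Gamma(Y,\uHom_{\cO_Y}(i^*\bE,\cN))$ is a sheaf map $(i^*\bE)^{-m}\to\cN$ that vanishes on the image of $(i^*\bE)^{-m-1}$, hence factors through the quotient, which is precisely $\coker(\bE[-m])$. This is an honest morphism $\coker(\bE[-m])\to\cN$ in $D^b(Y)$ between single sheaves, defined for every $m\in\Z$, which one then pushes to $\Dsing(Y)$ via the canonical functor. Coboundaries factor through the locally free sheaf $(i^*\bE)^{-m+1}$, which is perfect on $Y$, hence die in $\Dsing(Y)$; so the assignment descends to cohomology. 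Finally one uses the natural isomorphism $\coker(\bE[-m])\cong(\coker\bE)[-m]$ in $\Dsing(Y)$ from \cite[3.12]{Polishchuk:2010ys} to land in the stated target. Everything here is visibly natural in $\bE$ and $\cN$ for every $m$. With this corrected definition, your identification of the composite with the chain of isomorphisms for $m\gg 0$ goes through as you wrote it.
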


\begin{proof}
Any element of $H^m
\Gamma( Y, \uHom_{\cO_Y}( i^* \bE, \cN ) )$ gives a morphism of sheaves
\[\coker(\bE[-m]) \to \cN\] which we consider as a morphism in
$\Dsing(Y)$. By \cite[3.12]{Polishchuk:2010ys}, there is a functorial isomorphism $\coker (\bE[-m] ) \cong
(\coker \bE)[-m]$ in $\Dsing(Y)$. This gives the map.

For $m \gg 0$ we have the following chain of isomorphisms:
$$
\begin{aligned}
\Hom_{D^b(Y)}(\coker \bE[-m], \cN) &  \cong
\Ext^m_{\cO_Y}(\coker \bE, \cN)  \cong \Gamma(Y, \cExt^m_{\cO_Y}(\coker \bE
,\cN)) \\
& 
\cong \Gamma(Y,\cH^m( \uHom_{\cO_Y}({i^* \bE}, {\cN}) ) ) \cong
H^m \Gamma( Y, \uHom_{\cO_Y}({i^* \bE}, {\cN})).  
\end{aligned}
$$
The second is given by \ref{sheaf_ext}(4), the
third by \ref{sheaf_ext}(1), and the fourth by \ref{hom_global_sections_commute_high_enough}.
One can check that the diagram
\[ \small{\xymatrix{ \Hom_{D^b(Y)}(\coker \bE[-m], \cN) \ar[d] \ar[r]^\cong &
  H^m \Gamma( Y, \uHom_{\cO_Y}({i^* \bE}, {\cN})) \ar@/^/[dl]\\
\Hom_{\Dsing(Y)}( (\coker \bE)[-m],\cN) & } }
\] 
commutes,
where the vertical map is induced by the canonical functor
$D^b(Y) \to \Dsing(Y)$ and the diagonal map is defined
above. The result now follows from Lemma \ref{lem813}, which shows 
that the vertical arrow is an isomorphism for $m \gg 0$.
\end{proof}

\begin{thm}
\label{hom_sing_cat}
Let $X$ be a scheme that is projective over a Noetherian ring,
$\cL = \cO_X(1)$ the corresponding very ample line bundle, and $W$ a
regular global section of $\cL$. 
Let $\bE$ be a matrix factorization, let $Y \into
X$ be the zero subscheme of $W$, and let $\cN$ be any
coherent sheaf on $Y$. For every $m \in \Z$ there is an isomorphism,
\[ \bH^m( Y, \uHom_{\cO_Y}( i^* \bE, \cN )) \xra{\cong} \Hom_{\Dsing(Y)}( \coker \bE[-m],
\cN ) \]
that is natural in both $\bE$ and $\cN$, and makes the following diagram commute:
\begin{equation}
\label{E416b}\xymatrix{ H^m\Gamma( Y, \uHom_{\cO_Y}( i^* \bE, \cN )) \ar[d]
  \ar@/^/[dr]& \\
\bH^m( Y, \uHom_{\cO_Y}( i^* \bE, \cN )) \ar[r]^(.45)\cong & \Hom_{\Dsing(Y)}( \coker \bE[-m],
\cN ).}
\end{equation}
The vertical map in the diagram is the canonical one
and the diagonal map is the map defined in Lemma \ref{hom_sing_isom}.
\end{thm}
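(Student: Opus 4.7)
The plan is to bootstrap from Lemma \ref{hom_sing_isom}, which establishes the isomorphism
\[
H^m\Gamma(Y, \uHom_{\cO_Y}(i^*\bE, \cN)) \xra{\cong} \Hom_{\Dsing(Y)}(\coker \bE[-m], \cN)
\]
for $m \gg 0$ (depending on $\bE$ and $\cN$). Two extensions are required to reach the present statement: one must pass from $H^m\Gamma$ to $\bH^m$, and one must remove the constraint $m \gg 0$.

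To remove the constraint on $m$, I would exploit the shift functor on matrix factorizations. A direct computation from the definition of $[1]$ shows $i^*(\bE[n]) \cong (i^*\bE)[n]$ as complexes of locally free sheaves on $Y$, and hence
\[
\bH^m(Y, \uHom_{\cO_Y}(i^*(\bE[n]), \cN)) \cong \bH^{m-n}(Y, \uHom_{\cO_Y}(i^*\bE, \cN)).
\]
By \cite[3.12]{Polishchuk:2010ys} (used already in the proof of Lemma \ref{hom_sing_isom}) one has $\coker(\bE[n]) \cong (\coker \bE)[n]$ in $\Dsing(Y)$, whence $\Hom_{\Dsing(Y)}(\coker(\bE[n])[-m],\cN) \cong \Hom_{\Dsing(Y)}((\coker\bE)[-(m-n)],\cN)$. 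Both sides of the asserted isomorphism therefore transform compatibly under $\bE \leadsto \bE[n]$, $m \leadsto m+n$, so proving the theorem at any single arbitrarily large index $m$ suffices.

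To pass from $H^m\Gamma$ to $\bH^m$, I would adapt the weak-equivalence construction from Lemma \ref{Lem2}. Given an integer $M$, set $\bE' := \Tot(\cP(j)^\bullet \otimes \bE)$ for $j$ sufficiently large; by Lemma \ref{Lem3} this is a weak equivalence $\bE' \to \bE$. The sheaves $\cE'_0, \cE'_1$ are direct sums of twists $\cE_i(-a)$ with $a$ arbitrarily large, so the components of $\uHom_{\cO_Y}(i^*\bE', \cN)$ in each degree $\geq M$ are direct sums of twists $\cN(b)$ with $b$ large. Serre's Vanishing Theorem on $Y$ kills the higher sheaf cohomology of each such component, and since $Y$ has bounded cohomological dimension for quasi-coherent sheaves, the hypercohomology spectral sequence degenerates to yield
\[
H^q\Gamma(Y, \uHom_{\cO_Y}(i^*\bE', \cN)) \xra{\cong} \bH^q(Y, \uHom_{\cO_Y}(i^*\bE', \cN)) \quad\text{for all } q \geq M.
\]
Applying Lemma \ref{hom_sing_isom} to $\bE'$ with $m \gg 0$ then gives $\bH^m \cong \Hom_{\Dsing(Y)}(\coker \bE'[-m], \cN)$. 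I transfer this back to $\bE$ using two facts: the weak equivalence $\bE' \to \bE$ is stalkwise a homotopy equivalence of matrix factorizations, so $i^*\bE' \to i^*\bE$ is a local chain homotopy equivalence and $\uHom_{\cO_Y}(i^*\bE, \cN) \to \uHom_{\cO_Y}(i^*\bE', \cN)$ is a quasi-isomorphism (the analog of Lemma \ref{Lem1}); and by Proposition \ref{affine_loc_acyclic} the cone of $\bE' \to \bE$ is locally contractible, so its cokernel is locally free and the induced map $\coker \bE' \to \coker \bE$ is an isomorphism in $\Dsing(Y)$.

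The main obstacle is constructing the map $\bH^m \to \Hom_{\Dsing(Y)}$ canonically, independently of the choices of $n$ and $\bE'$, and verifying naturality in both variables together with the commutativity of diagram \eqref{E416b}. For this I would give a direct definition: the brutal truncation $(i^*\bE)_{\geq 0}$ is a locally free resolution of $\coker \bE$ on $Y$ by Lemma \ref{sheaf_ext}(1), and a Cech cocycle representing a class in $\bH^m(Y, \uHom_{\cO_Y}(i^*\bE, \cN))$ restricts along $(i^*\bE)_{\geq 0} \hookrightarrow i^*\bE$ to produce a morphism $\coker \bE[-m] \to \cN$ in $D(Y)$, whose image in $\Dsing(Y)$ is the desired map. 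Compatibility with the diagonal of \eqref{E416b} is then a diagram chase using that the diagonal is built from the chain $\Ext^m_{\cO_Y}(\coker \bE, \cN) \cong \Gamma(Y, \cExt^m) \cong \Gamma(Y, \cH^m(\uHom_{\cO_Y}(i^*\bE,\cN))) \cong H^m\Gamma$ assembled in the proof of Lemma \ref{hom_sing_isom}, together with the naturality of the edge map of the spectral sequence $E_2^{p,q} = H^p(Y, \cH^q) \Rightarrow \bH^{p+q}$.
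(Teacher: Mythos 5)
Your plan correctly isolates the two tasks — passing from $H^m\Gamma$ to $\bH^m$, and removing the constraint $m \gg 0$ — but the shift argument you use for the second task has a genuine gap.

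The trouble is that the threshold in Lemma \ref{hom_sing_isom} depends on the matrix factorization, and it tracks exactly with the shift. You observe that both sides of the asserted isomorphism transform compatibly under $\bE \leadsto \bE[n]$, $m \leadsto m+n$, so (e.g.) proving the statement for $\bE[n]$ at degree $m_0+n$ would yield it for $\bE$ at degree $m_0$. But to invoke Lemma \ref{hom_sing_isom} for $\bE[n]$ at degree $m_0+n$ you need $m_0+n$ to exceed the threshold $M(\bE[n],\cN)$ — and this threshold grows linearly in $n$. Concretely, for $n = 2k$ one has $\coker(\bE[2k]) \cong (\coker \bE)(k)$, so $\cExt^q(\coker(\bE[2k]),\cN) \cong \cExt^{q-2k}(\coker \bE,\cN)$, and the Serre-vanishing estimate in the proof of Lemma \ref{sheaf_ext}(\ref{edge_map_ss}) kicks in only for $q - 2k \gg 0$. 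Thus $M(\bE[2k],\cN) \approx M(\bE,\cN) + 2k$, and the requirement $m_0 + 2k \geq M(\bE[2k],\cN)$ reduces to $m_0 \geq M(\bE,\cN)$, which is exactly the constraint you were trying to eliminate. Shifting in the opposite direction has the same failure, for the same reason. In short, "proving the theorem at any single arbitrarily large $m$" does not suffice because "large" is not uniform across the shifts $\bE[n]$.

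The paper gets around this by a descending induction along the Koszul resolution of $\cO_X$ coming from the embedding $X \into \bP^n_Q$. From $0 \to \cO_X \to \cO_X(1)^n \to \cdots \to \cO_X(n) \to 0$ one sets $\cP_j := \ker\bigl(\cO_X(j)^{\binom{n}{j}} \to \cO_X(j+1)^{\binom{n}{j+1}}\bigr)$ and obtains exact sequences $0 \to \bE \otimes \cP_j \to \bE\otimes\cO(j)^{\binom{n}{j}} \to \bE \otimes \cP_{j+1} \to 0$. Because $(i^*\bE)(j) \cong (i^*\bE)[2j]$, the already-established range $m \geq M$ for $\bE$ yields the statement for $\bE\otimes\cO(j)$ down to $m \geq M - 2j$; a five-lemma argument run down from $j=n$ (where $\cP_n = \cO(n)$) to $j=1$ (where $\cP_1 = \cO_X$) then extends the range for $\bE$ itself to $m \geq M - 2$, and iterating gives all $m$. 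The crucial point is that the Koszul sequences \emph{interlock} different twists of $\bE$ at different cohomological degrees, letting the known range propagate downward — this is precisely what a pure shift of $\bE$ cannot do.

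Two smaller remarks. First, for passing from $H^m\Gamma$ to $\bH^m$, replacing $\bE$ by $\bE' = \Tot(\cP(j)^\bullet \otimes \bE)$ is unnecessary: the paper uses the hypercohomology spectral sequence $E_2^{p,q} = H^p(Y, \cH^q(\uHom_{\cO_Y}(i^*\bE,\cN)))$ directly, together with the twisted periodicity $\cH^{2q+2} \cong \cH^2(q)$ and Serre vanishing on $Y$, to show the vertical map of \eqref{E416b} is an isomorphism for $m \gg 0$ without altering $\bE$; combined with Lemma \ref{hom_global_sections_commute_high_enough} this is cleaner, and it keeps the diagram \eqref{E416b} attached to $\bE$ rather than a replacement $\bE'$ (which otherwise requires extra bookkeeping to verify commutativity). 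Second, your direct construction of the map via the brutal truncation $(i^*\bE)_{\geq 0}$ is a reasonable sketch, but note the paper does not need it: once the vertical and diagonal maps of \eqref{E416b} are both shown to be isomorphisms for $m \geq M$, the bottom map is forced, and its naturality and behavior under the Koszul induction come for free.
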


\begin{proof}
 Consider the vertical and diagonal maps in \eqref{E416b}, both of
 which are natural in both arguments.
By Lemma \ref{hom_sing_isom} the diagonal map is an isomorphism for $m \gg 0$.
For the vertical map, 
we use the spectral sequence
$$
E_2^{p,q} = H^p(Y,\cH^q(\uHom_{\cO_Y}( i^* \bE, \cN )))
\Longrightarrow \bH^{p+q}(X, \uHom_{\cO_Y}( i^* \bE, \cN )).
$$
Since $\bE$ is a matrix factorization, we have that \[\cH^{2q+2}( \uHom_{\cO_Y}(
i^* \bE, \cN )) \cong \cH^2( \uHom_{\cO_Y}( i^* \bE, \cN ))(q)\] and
similarly in the odd case. Thus by Serre's Vanishing Theorem, the
spectral sequence above degenerates for
$q \gg 0$, giving isomorphisms
\[ \Gamma(Y, \cH^q( \uHom_{\cO_Y}( i^* \bE, \cN ) ) ) \cong \bH^q( Y,
\uHom_{\cO_Y}( i^* \bE, \cN ) ). \] 
By Lemma \ref{hom_global_sections_commute_high_enough}, for $q \gg 0$,
there is an isomorphism \[\Gamma( Y, \cH^q(
\uHom_{\cO_Y}( i^* \bE, \cN ) ) ) \cong H^q(\Gamma(Y, \uHom_{\cO_Y}( i^* \bE, \cN ) ) ) \] which shows that the left-hand map in
\eqref{E416b} is an isomorphism for $q \gg 0$.

We have proven that there is an integer $M$ such that there exists an
isomorphism
\begin{equation} \label{E418}
\bH^m( \uHom_{\cO_Y}( i^* \bE, \cN )) \to 
\Hom_{\Dsing(Y)}(\coker \bE[-m], \cN)
\end{equation}
causing \eqref{E416b} to commute,
for all $m \geq M$.
Since $X$ is a subscheme of $\Proj Q[x_1, \dots, x_n]$ for some
Noetherian ring
$Q$ and integer $n$, we have an associated Koszul exact sequence
$$
0 \to \cO_X \to \cO_X(1)^n \to \cO_X(2)^{n \choose 2} \to \cdots \to
\cO_X(n)^{n \choose n} \to 0.
$$
Define $\cP_j$ for $j = 1, \dots, n$ to be the kernel of the map
$\cO_X(j)^{n \choose  j} \to \cO_X(j+1)^{n \choose j+1}$ in this
sequence. We have an exact sequence of locally free sheaves
$$
0 \to \cP_j \to \cO(j)^{n \choose j} \to \cP_{j+1} \to 0
$$
for each $j = 1, \dots, n$, from which we obtain an exact sequence of
matrix factorizations
\[
0 \to \bE \otimes \cP_j \to \bE \otimes \cO(j)^{n \choose j}\to \bE \otimes \cP_{j+1} \to 0.
\]
We claim there exists an isomorphism
$$
\bH^0 \uHom_{\cO_Y}((i^* \bE \otimes \cP_j)[-m], \cN) \to 
\Hom_{\Dsing(Y)}(\coker \bE[-m] \otimes \cP_j, \cN)
$$
for all $m \geq M - 2j$ and for each $j = 1, \dots,
n$, making the evident analogue of \eqref{E416b} commute. 
To see this,
note that $(i^* \bE)(j) \cong (i^* \bE)[2j]$. Thus there exists an isomorphism
$$
\bH^0 \uHom_{\cO_Y}((\bE \otimes \cO(j)^{n \choose j})[-m], \cN )
\to
\Hom_{\Dsing(Y)}((\coker \bE \otimes \cO(j)^{n \choose
  j})[-m], \cN )
$$
for all $m \geq M - 2j$, making the evident analogue of \eqref{E416b} commute. The claim follows
immediately by descending induction on $j$. 

But $\cP_1 = \cO_X$ and so $i^* \bE \otimes \cP_1 = i^* \bE$, from which we
deduce that isomorphisms as in \eqref{E418}
exist for all $m \geq M - 2$, having started from the assumption
that there existed isomorphisms for $m \geq M$. Clearly such
isomorphisms exist then for all $m$.
\end{proof}

\section{Relating matrix factorizations and the singularity category}
The goal of this section is to prove Theorem \ref{introthm2} of the
introduction. We continue to assume $X$ is a Noetherian separated scheme, 
$\cL$ is a line bundle on
$X$, and $W \in \Gamma(X, \cL)$ is a global
section. Define $Y \into
X$ to be the zero subscheme of $W$.

For a matrix factorization
$$
\bE  = \left(\cE_1 \map{e_1} \cE_0 
  \map{e_0} \cE_1(1)\right),
$$ 
we view $\coker(\bE) := i^* \coker(e_1)$ as an object of
$\Dsing(Y)$. This assignment is natural in $\bE$, so gives a functor
$$
\begin{aligned}
\coker: [MF(X, \cL, W)]_\naive & \to \Dsing(Y) \\
\bE \phantom{XXX} & \mapsto \coker(\bE).
\end{aligned}
$$
By
\cite[3.12]{Polishchuk:2010ys} this is a triangulated functor.

\begin{lem} For a Noetherian separated scheme $X$, line bundle $\cL$,
  and a regular global section $W$ of $\cL$, 
if $\bE$ is a locally contractible matrix factorization, then
  $\coker(\bE) = 0$ in $D_{sing}(Y)$.
\end{lem}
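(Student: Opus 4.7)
The plan is to combine Proposition \ref{affine_loc_acyclic} with the definition of the singularity category. Specifically, by that proposition, a matrix factorization $\bE$ is locally contractible if and only if $i^{*}\coker(e_1) = \coker(\bE)$ is a locally free coherent sheaf on $Y$ (this is the equivalence $(3) \Longleftrightarrow (4)$, which holds in full generality, not just in the affine case).

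Granted this, the argument reduces to the observation that a locally free coherent sheaf on $Y$, viewed as a complex concentrated in degree zero, is perfect in the sense used to define $\Dsing(Y)$. Indeed, locally it is quasi-isomorphic to a bounded complex of free modules of finite rank (namely itself). Hence its class in the Verdier quotient $\Dsing(Y) = D^b(Y)/\Perf(Y)$ is zero.

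So the proof is essentially two lines: apply Proposition \ref{affine_loc_acyclic} to get $\coker(\bE)$ is locally free, then note that locally free coherent sheaves are perfect and thus vanish in $\Dsing(Y)$. There is no real obstacle here; the only subtlety is remembering that the implication $(3) \Rightarrow (4)$ of Proposition \ref{affine_loc_acyclic} is valid for arbitrary $X$ (not just affine $X$), which is the input we need.
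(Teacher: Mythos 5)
Your proof is correct and follows essentially the same route as the paper. The paper phrases it locally (showing $\coker(\bE)_x$ is free for each $x \in Y$ by applying Proposition \ref{affine_loc_acyclic} to the localized matrix factorization over $\cO_{X,x}$), whereas you directly invoke the global implication $(3) \Rightarrow (4)$ of that proposition; these are the same argument, since $(3) \Rightarrow (4)$ is itself proved by exactly the local observation. Your remaining step — that a locally free coherent sheaf on $Y$ is perfect and hence trivial in $\Dsing(Y)$ — is the implicit justification for the paper's opening line "it is enough to show $\coker(\bE)_x$ is free," so nothing is missing.
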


\begin{proof} 
It is enough to show
that $\coker(\bE)_x$ is a free $\cO_{Y,x}$ module for all $x \in Y$.
 By
assumption $\bE_x = 0 \in HMF( \cO_{X,x}, W_x)$ for all $x \in X$, and
so the result follows from
Proposition \ref{affine_loc_acyclic}.
 \end{proof}

By the universal property of localization we immediately obtain:
\begin{prop} \label{Prop6-9}
For a Noetherian separated scheme $X$,  a line bundle
  $\cL$ and a regular global section $W$ of $\cL$, 
there is a triangulated functor
$$
\ocoker: [MF(X, \cL, W)]
  \to \Dsing(Y)
$$
such that the composition 
$$
[MF(X, \cL, W)]_\naive \to
[MF(X, \cL, W)] 
  \xra{\ocoker} \Dsing(Y)
$$
is the functor $\coker$. 
\end{prop}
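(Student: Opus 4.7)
The plan is to apply the universal property of Verdier localization. We have already been given two ingredients: the triangulated functor
$$
\coker\colon [MF(X,\cL,W)]_\naive \to \Dsing(Y)
$$
(triangulated by \cite[3.12]{Polishchuk:2010ys}, as noted just before the proposition), and the immediately preceding lemma, which asserts that $\coker(\bE) = 0$ in $\Dsing(Y)$ whenever $\bE$ is locally contractible. Recall also from the discussion in \S2 that the collection of locally contractible matrix factorizations forms a thick subcategory of $[MF(X,\cL,W)]_\naive$, being an arbitrary intersection of kernels of triangulated functors.

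First I would note that since $\coker$ sends every object of this thick subcategory to a zero object of $\Dsing(Y)$, it in particular sends every morphism between locally contractible objects to a zero morphism (and more generally, sends any morphism whose cone is locally contractible to an isomorphism, since its cone goes to a zero object in the triangulated category $\Dsing(Y)$). Hence $\coker$ inverts every morphism in the class $\Sigma$ of maps becoming isomorphisms in the Verdier quotient.

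Next I would invoke the universal property of the Verdier quotient: given any triangulated functor out of $[MF(X,\cL,W)]_\naive$ that annihilates the thick subcategory of locally contractible objects, there exists a unique triangulated functor from the quotient
$$
[MF(X,\cL,W)] = [MF(X,\cL,W)]_\naive \Big/ \text{locally contractible objects}
$$
to the target, whose precomposition with the canonical quotient functor recovers the original functor. Applying this to $\coker$ produces the desired triangulated functor $\ocoker$ together with the required factorization. There is no real obstacle here; the content of the proposition is entirely in the preceding lemma, and the present statement is a formal consequence of the universal property of Verdier localization.
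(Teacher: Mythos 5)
Your argument is exactly the paper's: the preceding lemma shows $\coker$ annihilates the thick subcategory of locally contractible objects, and the universal property of Verdier localization then produces $\ocoker$. The paper states this as an immediate consequence without further elaboration, so your write-up is correct and matches the intended proof.
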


\begin{thm} \label{MainThm2}  
Let $X$ be a scheme that is projective over a Noetherian ring,
$\cL = \cO_X(1)$ the corresponding very ample line bundle, and $W$ a
regular global section of $\cL$.
Define $Y \into
X$ to be the zero subscheme of $W$. Then the 
triangulated functor
$$
\ocoker: [MF(X, \cL, W)] \to \Dsing(Y)
$$
is fully faithful. 
\end{thm}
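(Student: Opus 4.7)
The plan is to chain together the results established in the previous two sections to obtain a natural bijection
\[
\Hom_{[MF]}(\bE, \bF) \xra{\cong} \Hom_{\Dsing(Y)}(\coker \bE, \coker \bF)
\]
for every pair of matrix factorizations $\bE, \bF$, and then to check that this bijection coincides with the one induced by the functor $\ocoker$ of Proposition \ref{Prop6-9}.

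First, by Theorem \ref{MainThm1} applied to $X$ and $\cL = \cO_X(1)$, the canonical functor $[MF(X,\cL,W)] \to [MF(X,\cL,W)]_{\bH}$ is an equivalence, so
\[
\Hom_{[MF]}(\bE, \bF) \cong \bH^0(X, \HomMF(\bE, \bF)).
\]
Second, since $W$ is a regular section and $\cN := \coker \bF$ is a coherent sheaf on $Y$, Lemma \ref{sheaf_ext}(3) provides a natural quasi-isomorphism of twisted periodic complexes
\[
\HomMF(\bE, \bF) \xra{\sim} i_* \uHom_{\cO_Y}(i^*\bE, \cN),
\]
which (via the spectral sequence comparing sheaf cohomology and hypercohomology) induces an isomorphism
\[
\bH^0(X, \HomMF(\bE, \bF)) \cong \bH^0(Y, \uHom_{\cO_Y}(i^*\bE, \cN)).
\]
Third, Theorem \ref{hom_sing_cat} applied at $m = 0$ gives a natural isomorphism
\[
\bH^0(Y, \uHom_{\cO_Y}(i^*\bE, \cN)) \xra{\cong} \Hom_{\Dsing(Y)}(\coker \bE, \coker \bF).
\]
Composing these three isomorphisms yields the desired bijection on Hom-sets.

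The one point that requires care is verifying that this composite bijection is precisely the map induced by $\ocoker$. For this, it is enough to check the assertion on strict morphisms, since every morphism in $[MF]$ is represented by a roof of strict morphisms with one leg a weak equivalence, and each isomorphism in the chain is natural in both variables. A strict morphism $g: \bE \to \bF$ determines a cycle in $Z^0 \Gamma(X, \HomMF(\bE, \bF))$, hence a class in $H^0 \Gamma(X, \HomMF(\bE, \bF))$. Pushing this class down through the canonical maps $H^0 \Gamma \to \bH^0$ and through the quasi-isomorphism of Lemma \ref{sheaf_ext}(3), it is transparently sent to the class represented by the map of sheaves $i^* g_1: i^* \cE_1 \to i^* \cF_1$ in $H^0 \Gamma(Y, \uHom_{\cO_Y}(i^*\bE, \cN))$, i.e.\ the morphism $\coker \bE \to \coker \bF$ induced on cokernels. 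The commutative triangle \eqref{E416b} of Theorem \ref{hom_sing_cat} then identifies this class with the corresponding map in $\Hom_{\Dsing(Y)}(\coker \bE, \coker \bF)$, which is exactly $\ocoker(g)$.

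The main obstacle is bookkeeping: one must show that a single universal isomorphism in $\bH^0$ realizes the map $\ocoker$, even though the isomorphism of Lemma \ref{hom_sing_isom} is a priori only constructed for $m \gg 0$ and then bootstrapped down to all $m$ via the Koszul-type argument in Theorem \ref{hom_sing_cat}. The diagram \eqref{E416b} is precisely what ensures compatibility between the ``canonical'' definition of the map (via the cokernel functor applied to a strict morphism) and the spectral-sequence identification coming from Ext groups; once this diagram is invoked, naturality does the rest.
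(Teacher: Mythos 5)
Your proposal follows essentially the same route as the paper: chain together Theorem~\ref{MainThm1}, the quasi-isomorphism of Lemma~\ref{sheaf_ext}(3), and Theorem~\ref{hom_sing_cat}, then verify compatibility with $\ocoker$ via diagram~\eqref{E416b}, differing from the paper only in that you argue directly on Hom-sets via a roof-and-naturality reduction where the paper packages the same chain into a functor $\Pprime\colon [MF]_\bH \to \Dsing(Y)$ and invokes the universal property of the Verdier quotient. One small imprecision in degree bookkeeping: the degree-zero component of the quasi-isomorphism in Lemma~\ref{sheaf_ext}(3) sends a strict morphism $(g_1,g_0)$ to the composite $p\circ g_0\colon \cE_0 \to i_*\cN$ (equivalently a map $i^*\cE_0 \to \cN$), not to $i^*g_1$, though of course the induced morphism on cokernels is the same.
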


\begin{proof}
We will show there is a fully faithful functor \[\Pprime:
[MF]_\bH \to \Dsing(Y)\] such that $\Pprime \circ H = \coker$, where
$H: [MF]_\naive \to [MF]_\bH$ is the functor defined in
\eqref{E815}. To see
this implies the Theorem, let $F: [MF]_\naive \to [MF]$ be the
canonical functor, and $G: [MF] \to [MF]_\bH$ the equivalence of Theorem
\ref{MainThm1}; note that $H = G \circ F$. Consider the commutative diagram:
\[ 
\xymatrix{
[MF]_\naive \ar[r]^(.55)F \ar[dr]_{\coker} & [MF] \ar[r]_(.45)\cong^(.45)G
\ar[d]_(.45){\ocoker} & [MF]_\bH \ar[dl]^{\Pprime} \\
& \Dsing(Y) & 
}
\]
Since $\Pprime \circ
G \circ F = \coker = \ocoker \circ F$, we have
that $\Pprime \circ G \cong \ocoker$ by the universal property of Verdier
quotients. Thus if $\Pprime$ is fully faithful, so will be $\ocoker$.

To define the functor $\Pprime$, we set
$\Pprime( \bE) = \coker \bE$. By Lemma \ref{sheaf_ext}(3)
there is a quasi-isomorphism
$$
\HomMF ({\bE},
{\bF} ) \xra{\sim} i_* \uHom_{\cO_Y}( i^* \bE, \coker \bF )
$$
that is natural in both arguments, and thus there is
a natural isomorphism 
$$
\bH^0(X, \HomMF ({\bE},
{\bF} )) \cong \bH^0( Y, \uHom_{\cO_Y}( i^* \bE, \coker \bF )). 
$$
By Theorem
\ref{hom_sing_cat} there is an isomorphism
\[ \bH^0( Y, \uHom_{\cO_Y}( i^* \bE, \coker \bF)) \cong
\Hom_{\Dsing(Y)}(\coker \bE, \coker \bF)\]
that is natural in both arguments. 
By composing these, we obtain a natural map
\[
\Hom_{[MF]_\bH}(\bE, \bF) = \bH^0(X, \HomMF ({\bE},
{\bF} )) \to \Hom_{\Dsing(Y)}(\coker \bE, \coker \bF)
\] 
determining the functor 
$\Pprime$, which is a fully faithful since this map is bijective.

We have left to prove that $\Pprime \circ H = \coker$. Both sides
agree on objects, but we need to show they induce the same map on morphisms. Consider the diagram:
\[ \small{\xymatrix{  H^0 \Gamma(X, \HomMF ({\bE},
{\bF} )) \ar[d]_H \ar[r] & H^0\Gamma( Y, \uHom_{\cO_Y}( i^* \bE,
\coker \bF)) \ar[d] \ar@/^/[dr] &\\
\bH^0(X, \HomMF ({\bE},
{\bF} )) \ar[r] &\bH^0( Y, \uHom_{\cO_Y}( i^* \bE, \coker \bF)) \ar[r] & \Hom_{\Dsing(Y)}( \coker \bE,
\coker \bF )
}} \]
The right triangle is
commutative by Theorem \ref{hom_sing_cat}. The left hand square is
commutative as the horizontal maps are induced by the map of chain
complexes \[\HomMF ({\bE},
{\bF} )\to i_* \uHom_{\cO_Y}( i^* \bE, \coker \bF )\] and $H^0(-) \to
\bH^0(-)$ is a natural transformation. The composition of the
bottom arrows in the diagram is by definition the map on morphisms
induced by the functor $\Pprime$. Thus by the commutativity of the
diagram, the map on morphisms induced by $\Pprime \circ H$ is the equal
to the composition of the top two arrows of the diagram. To finish the proof
it is now enough to show that the following diagram commutes:
\[ \xymatrix{
\Hom_{[MF]}( \bE, \bF) =  H^0 \Gamma(X, \HomMF ({\bE},
{\bF} )) \ar@/^/[dr]^(.6){\coker} \ar[d] & \\
H^0\Gamma( Y, \uHom_{\cO_Y}( i^* \bE,
\coker \bF)) \ar[r] & \Hom_{\Dsing(Y)}( \coker \bE,
\coker \bF )
} \] 
This follows from the definition of the map $\HomMF ({\bE},
{\bF} )\to i_* \uHom_{\cO_Y}( i^* \bE, \coker \bF )$ given in (the
proof of) \ref{sheaf_ext}(3).
\end{proof}

\begin{rem}
Theorem~\ref{MainThm2} has been proved in
\cite[3.14]{Polishchuk:2010ys} in case $X$ is a smooth stack. Orlov
proves an analogue in \cite{1101.4051} without assuming $X$ is
projective over an affine scheme. However he assumes that $\cL =
\cO_X$. His Theorem
does not seem to imply ours, nor does ours imply his, as $\cO_X$ is
not very ample in general. Lin and Pomerleano, in \cite{1101.5847},
give a different prove of Orlov's Theorem in case $X$ is defined over
$\C$ and smooth. Finally
Positselski, in \cite{1102.0261},
proves an analogue of Theorem \ref{MainThm2} using exotic derived categories,
which does not require $X$ to be projective. He does not require the coherent sheaves in the definition of
matrix factorization to be locally free.
\end{rem}

Theorem \ref{MainThm3} will show that the objects defined below are
\emph{exactly} the cokernels of matrix factorizations in
$\Dsing(Y)$, under the assumptions of Theorem \ref{MainThm2}.
\newcommand{\rperf}[2]{\operatorname{RPerf}(#1 \into #2)}
\begin{defn}

\label{defn_rperf}
Let $i: Y \into X$ be a closed immersion of finite flat dimension.
An object $\cF$ in $D^b(Y)$ is
\emph{relatively perfect} on $Y$ if
$i_* \cF$ is perfect on $X$. We write $\rperf Y X$ for the full
subcategory of $D^b( Y)$ whose objects are relatively perfect
on $X$.

Since $i$ has finite flat dimension, $\Perf(Y)$ is a thick subcategory
of $\rperf Y
X$. We define the {\em relative singularity category of} $i$ to be
the Verdier quotient
$$
\rDsg(Y \into X) := \frac{\rperf Y X}{\Perf(Y)}.
$$
The canonical functor
$$ 
\rDsg(Y \into X) \to \Dsing(Y)
$$
is fully faithful and we may thus identify $\rDsg(Y \into X) $ with
a full subcategory of $\Dsing(Y)$.
\end{defn}

\begin{rem}
In this same context, a relative singularity category has also been defined by Positselski
in \cite{1102.0261}. In general these two categories need not
be equivalent, but we point out a relation between the two in \ref{comparison_rs}.
\end{rem}

The following establishes one half
of Theorem \ref{MainThm3} (under milder assumptions):
\begin{lem}
\label{rel_perf_mf} Assume $X$ is a Noetherian separated scheme of
finite Krull dimension and that $X$ has enough
locally frees (i.e., every coherent sheaf on $X$ is the quotient of a
locally free coherent sheaf). Let $\cL$ be a line bundle on $X$,
assume $W$ a
regular global section of $\cL$ and 
let $i: Y \into X$ be the zero subscheme of $W$. For
every object $\cG$ of 
$\rDsg(Y \into X)$, there exists a matrix factorization $\bE$ and an isomorphism
$\cG \cong \coker \bE$ in $\rDsg(Y \into X)$.
\end{lem}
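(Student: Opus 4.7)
The plan is to reduce to the case where $\cG$ is a coherent sheaf on $Y$ with $i_*\cG$ of projective dimension at most one on $X$, and then to build the matrix factorization directly by the Eisenbud-type argument sketched in the introduction.

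First, I would reduce $\cG$ to a single coherent sheaf on $Y$. Since every coherent sheaf on $Y$ is $i^*$ of a coherent sheaf on $X$, the hypothesis that $X$ has enough locally frees passes to $Y$; hence any bounded complex on $Y$ admits a bounded-above locally free resolution. A standard truncation-at-a-deep-syzygy argument then writes $\cG$ in $\Dsing(Y)$ as a shift $\cK[N]$ of a single coherent sheaf $\cK$. Because $i$ has finite flat dimension (via the length-one Koszul resolution $0 \to \cL^{-1} \xra{W} \cO_X \to i_*\cO_Y \to 0$), $i_*$ preserves perfect complexes, so $\cK$ remains in $\rDsg(Y \into X)$. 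Using the shift-compatibility $\coker(\bE[k]) \cong (\coker \bE)[k]$ in $\Dsing(Y)$ from \cite[3.12]{Polishchuk:2010ys}, it suffices to produce a matrix factorization with cokernel $\cK$; an appropriate shift can be applied at the end.

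Second, I would inductively reduce the $X$-projective dimension of $i_*\cK$. Writing $\cG$ now for $\cK$ and setting $n := \operatorname{pd}_X(i_*\cG)$, if $n \geq 2$ pick a surjection $\cF \onto i_*\cG$ with $\cF$ locally free on $X$ and set $\cG' := \ker(i^*\cF \to \cG)$, a coherent sheaf on $Y$. The short exact sequence $0 \to \cG' \to i^*\cF \to \cG \to 0$ on $Y$ gives $\cG \cong \cG'[1]$ in $\Dsing(Y)$, since $i^*\cF$ is locally free, hence perfect, on $Y$. Pushing forward to $X$ yields $0 \to i_*\cG' \to i_*i^*\cF \to i_*\cG \to 0$, and the Koszul resolution $0 \to \cF(-1) \xra{W} \cF \to i_*i^*\cF \to 0$ shows that $i_*i^*\cF$ has projective dimension at most one. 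A standard long-exact-$\Ext$ comparison then gives $\operatorname{pd}_X(i_*\cG') \leq n - 1$, and $\cG'$ remains in $\rDsg(Y \into X)$ by the thickness of $\Perf(X)$. Iterating $n - 1$ times produces a coherent sheaf $\cG_{n-1}$ with $\operatorname{pd}_X(i_*\cG_{n-1}) \leq 1$ and $\cG \cong \cG_{n-1}[n-1]$ in $\Dsing(Y)$.

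Finally, in the base case take a short exact sequence $0 \to \cE_1 \xra{e_1} \cE_0 \to i_*\cG_{n-1} \to 0$ with $\cE_0, \cE_1$ locally free coherent on $X$. Since multiplication by $W$ kills $i_*\cG_{n-1}$, the composite $\cE_0 \xra{W} \cE_0(1) \to i_*\cG_{n-1}(1)$ vanishes, and projectivity of $\cE_0$ lifts $W$ through the inclusion $\cE_1(1) \into \cE_0(1)$ to a map $e_0 \colon \cE_0 \to \cE_1(1)$ with $e_1(1) \circ e_0 = W \cdot \id_{\cE_0}$. Injectivity of $e_1(1)$ (since $e_1$ is the inclusion of the kernel) then forces $e_0 \circ e_1 = W \cdot \id_{\cE_1}$, so $\bE_0 := (\cE_1 \to \cE_0 \to \cE_1(1))$ is a matrix factorization with $\coker \bE_0 \cong \cG_{n-1}$. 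Setting $\bE := \bE_0[n-1+N]$ gives $\coker \bE \cong \cG$ in $\rDsg(Y \into X)$. The main technical point I anticipate is the initial reduction of $\cG$ to a single coherent sheaf together with the verification that relative perfectness is preserved throughout the iteration; both hinge on the thickness of $\Perf(X)$ and the finite flat dimension of $i$, which in turn follow from $W$ being a regular global section.
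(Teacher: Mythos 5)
Your proposal is correct and takes essentially the same approach as the paper's proof: first reduce $\cG$ to a single coherent sheaf via brutal truncation of a locally free resolution on $Y$, then reduce to the case where $i_*\cG$ has local projective dimension at most one on $X$ by taking syzygies (the paper does this in one truncation at degree $-\dim X$ rather than one step at a time, but the argument is the same), and finally build the matrix factorization directly by the Eisenbud-style diagram from the introduction. One small wording slip: the lift $e_0\colon \cE_0 \to \cE_1(1)$ in the base case comes from the universal property of the kernel $\cE_1(1) = \ker\bigl(\cE_0(1)\to i_*\cG_{n-1}(1)\bigr)$, not from any projectivity of $\cE_0$, but the construction is nevertheless correct.
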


\begin{proof}
Let $\cF$ be a right bounded complex of
locally free coherent sheaves on $Y$ that maps quasi-isomorphically to $\cG$. Such a
complex exists since $X$, and hence $Y$,  has enough locally frees. 
Let $\cF^{\leq k}$ denote the brutal truncation of $\cF$ in
degree $k$. For any $k$, the cone of the canonical map
$\cF \to \cF^{\leq k}$ is a perfect complex and 
hence $\cF \to \cF^{\leq k}$ is an
isomorphism in $\rDsg(Y \into X)$.
Taking $k \ll 0$, the complex $\cF^{\leq
  k}$ is exact except in degree $k$, and hence we have reduced to the
case where $\cG = \cM[-k]$ for
some coherent sheaf $\cM$ and integer $k$. Since $\coker$ is triangulated, we may
assume $k = 0$.

Since $i_* \cG$ is a perfect complex, $i_* \cM$ is locally of finite
projective dimension. In fact, for all $x \in X$, the projective
dimension of $i_* \cM_x$ as a $\cO_{X,x}$-module is at most $d :=
\dm(X)$. Consider again a resolution $\cF$ of $\cM$ by locally free coherent
sheaves on $Y$. Since  a locally free coherent sheaf
on $Y$ is locally of projective dimension one as a coherent sheaf on
$X$, a high enough 
syzygy of this resolution of $\cM$ will also be locally of projective
dimension one on $X$. Specifically, the only non-zero cohomology sheaf
of
$i_* \cF^{\leq -d}$ will be locally of projective dimension one on
$X$, and since $\cF^{\leq -d} \cong \cM = \cG$ in $\rDsg(Y \into X)$,
we may assume $\cG = \cM$ where $\cM$ is a coherent sheaf on $Y$ such that $i_*\cM$ is
locally of projective dimension one on $X$.

Now consider any surjection $\cE_0
\onto i_* \cM$ with $\cE_0$ a locally free coherent sheaf on
$X$. Since $i_* \cM$ is locally of projective dimension one, the
kernel $\cE_1$ of this surjection is locally free. That is,
we have a resolution of the form
$$
0 \lra \cE_1 \map{\alpha} \cE_0 \lra i_*\cM \lra 0
$$
with $\cE_0, \cE_1$ locally free on $X$. In the diagram
$$
\xymatrix{
\cE_1 \ar[r]^{\alpha} \ar[d]^W & \cE_0 \ar@{-->}[ld]_\beta \ar[r]
\ar[d]^W &  i_* \cM \ar[d]^W \\
\cE_1(1) \ar[r]^{\alpha(1)} & \cE_0(1) \ar[r] & i_* \cM(1) \\
}
$$
the right-most map is the zero map, and hence there exists diagonal
arrow $\beta$ as shown causing both triangles to commute.
This determines a matrix
factorization $\bE$ with $\coker(\bE) \cong \cM = \cG$.
\end{proof}

\begin{thm}
\label{MainThm3}
Let $X$ be a scheme that is projective over a Noetherian ring of
finite Krull dimension,
$\cL = \cO_X(1)$ the corresponding very ample line bundle, and $W$ a
regular global section of $\cL$. Define $Y \into
X$ to be the zero subscheme of $W$.
The functor $\ocoker$, defined in Proposition \ref{Prop6-9}, factors naturally through the subcategory
$\rDsg(Y \into X)$ of $\Dsing(Y)$ and the induced map is an
equivalence of triangulated categories:
$$ 
[MF(X, \cL, W)] \xra{\cong} \rDsg(Y \into X)
$$

In particular, if $X$
is regular, then we have an equivalence of triangulated categories
$$
\ocoker: [MF(X, \cL, W)] \xra{\cong} \Dsing(Y).
$$  
\end{thm}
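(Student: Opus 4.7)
The plan is to decompose the theorem into three assertions and use results already established. First I would check that $\ocoker$ factors through $\rDsg(Y \into X)$, next invoke Theorem \ref{MainThm2} for full faithfulness, then use Lemma \ref{rel_perf_mf} for essential surjectivity, and finally handle the regular case by showing $\rDsg(Y \into X) = \Dsing(Y)$ in that situation.

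For the factorization, given a matrix factorization $\bE = (\cE_1 \xra{e_1} \cE_0 \xra{e_0} \cE_1(1))$, the short exact sequence $0 \to \cE_1 \xra{e_1} \cE_0 \to i_*\coker(e_1) \to 0$ realizes $i_*\coker(\bE) = i_*i^*\coker(e_1) \cong \coker(e_1)$ as a two-term complex of locally free coherent sheaves on $X$. Hence $i_*\coker(\bE)$ is perfect on $X$, which is precisely the condition for $\coker(\bE)$ to lie in $\rperf Y X$. The functor $\ocoker: [MF(X, \cL, W)] \to \Dsing(Y)$ therefore lands in the full subcategory $\rDsg(Y \into X)$.

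Full faithfulness of the induced functor $[MF(X, \cL, W)] \to \rDsg(Y \into X)$ is immediate from Theorem \ref{MainThm2}, since $\rDsg(Y \into X)$ is (identified with) a full subcategory of $\Dsing(Y)$ and the composition $[MF] \to \rDsg(Y \into X) \into \Dsing(Y)$ is $\ocoker$, which is already fully faithful. For essential surjectivity, given $\cG \in \rDsg(Y \into X)$, Lemma \ref{rel_perf_mf} (whose hypotheses — finite Krull dimension of $X$ and existence of enough locally free sheaves — are satisfied in our setting, the latter by the remark following that lemma) produces a matrix factorization $\bE$ with $\coker(\bE) \cong \cG$ in $\rDsg(Y \into X)$. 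Thus the induced functor is an equivalence.

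Finally, suppose $X$ is regular. Then every coherent sheaf on $X$ has finite projective dimension, so every bounded complex of coherent sheaves on $X$ is perfect. In particular, for any object $\cC \in D^b(Y)$ the pushforward $i_*\cC$ is a perfect complex on $X$, which shows $\rperf Y X = D^b(Y)$ and hence $\rDsg(Y \into X) = \Dsing(Y)$. The equivalence $\ocoker: [MF(X, \cL, W)] \xra{\cong} \Dsing(Y)$ then follows from the first part of the theorem. The only delicate point throughout is verifying the hypotheses of Lemma \ref{rel_perf_mf}, but since $X$ is projective over a Noetherian ring of finite Krull dimension, both the enough-locally-frees condition and the finite Krull dimension assumption are satisfied, so the argument goes through without obstacle.
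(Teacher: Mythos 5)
Your proposal is correct and follows essentially the same route as the paper: the factorization through $\rDsg(Y \into X)$ via the two-term presentation of $i_*\coker(\bE)$, full faithfulness inherited from Theorem~\ref{MainThm2}, essential surjectivity from Lemma~\ref{rel_perf_mf}, and the observation that $\rDsg(Y \into X) = \Dsing(Y)$ when $X$ is regular. The only detail worth making explicit, which the paper does spell out, is why $0 \to \cE_1 \xra{e_1} \cE_0 \to i_*\coker(e_1) \to 0$ is actually exact: $e_1$ is injective because $e_0 \circ e_1 = W$ is injective, $W$ being a regular section.
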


\begin{proof}
For any matrix factorization $\bE = (\cE_1 \xra{e_1} \cE_0 \xra{e_0} \cE_1)$ there is a short exact sequence of
coherent sheaves on $X$
$$
0 \to \cE_1 \xra{e_1} \cE_0 \to i_* \coker(\bE) \to 0.
$$
Indeed, since $W$ is regular section, the composition $e_0 \circ e_1
= W : \cE_1
\to \cE_1(1)$ is injective and hence $e_1$ must be.
Thus $i_* \coker( \bE)$ is perfect and $\coker$ factors
through the subcategory $\rDsg(Y \into X)$. On the other hand, for any object
$\cF$ in $\rDsg(Y \into X)$, there is a matrix factorization $\bE$ such that
$\coker \bE \cong \cF$ by Lemma \ref{rel_perf_mf}.

The final assertion holds since every bounded complex of coherent
sheaves on a regular scheme is perfect.  
\end{proof}

\begin{rem}
 \label{comparison_rs}
Let $i: Y \into X$ be as above. Let $\cT$ be the thick subcategory of
$\Dsg(Y)$ generated by the objects $\bL i^* \cM$, where $\cM$ ranges
over all objects of $\Dsg(X)$. Positselski defines in \cite{1102.0261} the relative
singularity category of $i: Y \into X$, which we denote $\rpDsg( Y
\into X)$, to be the Verdier quotient $\Dsg(Y)/ \cT$. It is easy to
check that $\rDsg( Y \into X)$ is a subcategory of \[\cT^{\perp} := \{
\, \cN \in \rDsg(Y) \, | \, \Hom_{\Dsg(Y)}( \cM, \cN) = 0 \text{ for
  all objects } \cM \in \cT \, \}.\]
This implies, keeping in mind the definition of morphisms in $\Dsg(Y)
/ \cT$, that the composition of the functors \begin{equation}\label{full_faithful_rs}\rDsg(Y \into X) \to
\Dsg(Y) \to \Dsg(Y) / \cT =: \rpDsg( Y
\into X)\end{equation} is fully faithful. 

This fully faithful functor need not be an equivalence. Indeed, let $k$ be a field and let $Q = k[[x,y]]/(x^2)$
and $R = Q/(y^2) = k[[x,y]]/(x^2, y^2)$. Let $Y := \Spec R \into \Spec Q =: X$ be the natural
inclusion. If \ref{full_faithful_rs} were an equivalence, this would
imply that the smallest thick subcategory of $\Dsg(Y)$ containing
$\rDsg(Y \into X)$ and $\cT$ were all of $\Dsg(Y)$. However we claim that the residue field $k$ is not in $\rDsg(Y \into
X)$ or $\cT$. Indeed, it is easy to check that objects in either of
these categories have
periodic free resolutions, but the ranks of the free modules in a
minimal free resolution of $k$
grow linearly and so $k$ cannot have a periodic resolution.

\end{rem}

\end{document}